\numberwithin{equation}{section}
\newcounter{keepeqno}
\newtheorem{theorem}{Theorem}[section]
\newtheorem{lemma}[theorem]{Lemma}
\newtheorem{proposition}[theorem]{Proposition}
\theoremstyle{definition}
\newtheorem{definition}[theorem]{Definition}
\newtheorem{remark}[theorem]{Remark}
\newtheorem{example}[theorem]{Example}
\DeclareMathOperator{\In}{{\rm in}}
\DeclareMathOperator{\lex}{\rm lex}
\DeclareMathOperator{\grevlex}{\rm grevlex}
\begin{document}
\bibliographystyle{plain}
\newcommand{\br} {{r}}                              
\newcommand{\A} {\mathbb{A}}                           
\newcommand{\PGL} {\Pj\Gl_2(\R)}                           
\newcommand{\RP} {\R\Pj^1}                                 
\newcommand{\C} {{\mathbb C}}                              
\newcommand{\bF}{\mathbb F}
\newcommand{\bR}{\mathbb R}
\newcommand{\cE} {{\mathcal E}}
\newcommand{\cF} {{\mathcal F}}
\newcommand{\cG} {{\mathcal G}}

\newcommand{\cI}{\mathcal I}
\newcommand{\cL} {{\mathcal L}}
\newcommand{\gr} {\mathrm{gr}}
\newcommand{\lev}{\mathrm{lev}}
\newcommand{\inn}{\mathrm{in}}
\newcommand{\R} {{\mathbb R}}                              
\newcommand{\Q}{{\mathbb Q}}                              
\newcommand{\Z} {{\mathbb Z}}                              
\newcommand{\N}{{\mathbb N}}                               
\renewcommand{\k}{{\mathbf k}}                               

\newcommand{\Pj} {{\mathbb P}}                             
\newcommand{\T} {{\mathbb T}}                              
\newcommand{\Sg} {{\mathbb S}}                             
\newcommand{\Gl} {{\rm Gl}}                                
\newcommand{\Sl} {{\rm Sl}}                                
\newcommand{\SL}{{\mathrm SL}}
\newcommand{\fn}{\mathfrak n}

\newcommand{\F}  [2] {\ensuremath{C_{#2}({#1})}}                            
\newcommand{\oM} [1] {\ensuremath{{\mathcal M}_{0,#1}(\R)}}                 
\newcommand{\M} [1] {\ensuremath{{\overline{\mathcal M}}{_{0, #1}(\R)}}}    
\newcommand{\cM} [1] {\ensuremath{{\mathcal M}_{0, #1}}}                    
\newcommand{\CM} [1] {\ensuremath{{\overline{\mathcal M}}_{0, #1}}}         
\newcommand{\bd} [1] {\mathbf{#1}}

\newcommand{\roverm}[1]{\frac{r(#1)}{m}}

\newcommand{\suchthat} {\ \ | \ \ }
\newcommand{\ore} {\ \ {\it or} \ \ }
\newcommand{\oand} {\ \ {\it and} \ \ }


\newcommand{\vw}{\mathbf{w}}
\newcommand{\PPP}[2]{(\Pj^{#1})^{#2}}
\newcommand{\floor}[1]{\lfloor #1 \rfloor}
\newcommand{\ceiling}[1]{\lceil #1 \rceil}
\def\Proj{\mathrm{Proj} \,}
\def\q{/\!/}
\newcommand{\tableau}[2]{\begin{array}{|c|}\hline #1 \\ \hline #2 \\ \hline \end{array}}
\newcommand{\vd}{\mathbf{d}}
\newcommand{\va}{\mathbf{a}}

\def\O{\mathcal{O}}                               
\def\X{\mathcal{X}}
\def\L{\mathcal{L}}
\def\<{\langle}
\def\>{\rangle}
\def\({\left (}
\def\){\right )}
 

\theoremstyle{plain}
\newtheorem{thm}{Theorem}
\newtheorem{prop}[thm]{Proposition}
\newtheorem{cor}[thm]{Corollary}
\newtheorem{lem}[thm]{Lemma}
\newtheorem{conj}[thm]{Conjecture}

\theoremstyle{definition}
\newtheorem{exmp}[thm]{Example}

\theoremstyle{remark}
\newtheorem{defn}[thm]{Definition}
\newtheorem{rem}[thm]{Remark}
\newtheorem*{hnote}{Historical Note}
\newtheorem*{nota}{Notation}
\newtheorem*{ack}{Acknowledgments}
\numberwithin{equation}{section}


\title { 
The ring of evenly weighted points on the line}
\date\today

\keywords{Invariant theory, Gelfand-Tsetlin polytopes, SAGBI degeneration}

\author{Milena Hering}
\address{Milena Hering, Maxwell Institute and School of Mathematics, University of Edinburgh, UK}
\email{m.hering@ed.ac.uk}

\author{Benjamin J.\ Howard}
\address{Benjamin J. Howard,
Center for Communications Research, Institute for Defense Analysis, Princeton, NJ 08540 USA}
\email{bjhowa3@idaccr.org}

\begin{abstract}
Let $M_w = (\Pj^1)^n \q \mathrm{SL}_2$ denote the geometric invariant theory 
quotient of $(\Pj^1)^n$ by the diagonal action of $\mathrm{SL}_2$
using the line bundle $\mathcal{O}(w_1,w_2,\ldots,w_n)$ on 
$(\Pj^1)^n$. 
Let $R_w$ be the coordinate ring of $M_w$. 
We give a closed formula for the  
Hilbert function of $R_w$, which allows us to compute the 
degree of $M_w$. The graded parts of $R_w$ are certain Kostka 
numbers, so this Hilbert function computes stretched Kostka 
numbers. 
If  all the weights $w_i$ are even, 
we find a presentation of $R_w$ so that the ideal $I_w$ of this presentation
has a quadratic Gr\"obner basis. In particular, $R_w$ is Koszul. 
We obtain this result by studying the homogeneous coordinate ring of a 
projective toric variety arising as a degeneration of $M_w$.
\end{abstract}

\thanks{
 The first author
   was partially supported by an Oberwolfach Leibniz Fellowship and 
   NSF grant
   DMS 1001859.}
   
\maketitle

\section{Introduction}

The study of the ring of invariants for the action of the automorphism group 
of $\Pj^1$ on $n$ points on $\Pj^1$ goes back to the 19th century.
In 1894 
Kempe \cite{Kempe}  proved that this ring  
is generated  by the invariants 
of lowest degree. 
More than a century later Howard, Millson, Snowden, and 
Vakil \cite{HMSV12} 
were finally able to describe the ideal of relations between Kempe's generators, when the characteristic of the ground field $\k$ is zero or $p > 11$. 

More generally, for 
$w=(w_1, \ldots, w_n) \in \Z^n$, let $L_w= \O_{(\Pj^1)^n}(w_1, 
\ldots, w_n)$. Assume that all $w_i$ are positive, so that 
$L_w$ is very ample. The group $\SL(2)$ acts diagonally on 
$(\Pj^1)^n$ and the line bundle $L_w$ admits a unique linearization. 
Let 
\[ R_w = \left( \bigoplus _{d\geq 0} H^0\left( (\Pj^1)^n, L_w^d\right) 
\right) ^{\SL(2)}\]
denote the corresponding ring of invariant sections, and let 
$M_w = (\Pj^1)^n\q\SL(2)$ denote the GIT quotient. When $w_i=1$ for $1\leq i\leq n$, we write $w=1^n$. 

In 
\cite[Theorem 2.3]{HMSV}
the authors show that $R_w$ is generated by the invariants of lowest degree for
arbitrary $w$ 
 and in \cite[Theorem 1.1]{HMSV12} that, in characteristic zero or $p > 11$, 
the ideal of relations $I_w$ is generated by quadratic polynomials in the generators 
unless $w=1^6$, in which case there is an essential cubic relation.
Moreover, in \cite[Section 2.15]{HMSV}, 
the authors obtain a recursive formula for the degree 
of $M_w$.

Our first theorem is an extension of Howe's formula  \cite[5.4.2.3]{Howe88} 
for the Hilbert function of $R_w$  in the case $w=1^n$ to arbitrary 
$w$. In particular, we obtain a closed formula for the degree of $M_w$.  

\begin{theorem}\label{thm:hilbertfn}
Let $[n] = \{ 1, \ldots, n\}$, 
and for $J\subseteq [n]$, 
set
 $|w_J| = \sum_{j\in J}w_j$,  
$w_{\emptyset}=0$ and $|w|=w_1+\cdots +w_n$.
\begin{enumerate}
\item
 The Hilbert function for $R_w$ is given by 
\begin{equation*} h(d)
 = \sum_{
 \substack{J\subseteq [n]\\ |w_J| < |w|/2}} (-1)^{|J|} \binom{d\left(|w|/2-
|w_J|\right)+n-|J| -2}{n-2} \end{equation*}
if $d|w|$ is even, and zero otherwise. 

\item \label{cor:degree}
For $|w|$ even, the degree of $M_w$ is 

\[ \frac{1}{n-2}\left( \sum_{
\substack{J \subseteq [n]\\ |w_J|<|w|/2}}\left(-1\right)^{|J|}
\left(|w|/2 -|w_J|\right)^{n-3}\left(\sum_{i=0}^{n-3}
n-|J|-2-i\right)\right).
\]
\end{enumerate}
\end{theorem}

Let $K(\lambda, \mu)$ be the 
Kostka number counting semistandard Young tableaux of shape 
$\lambda$ with filling $\mu$.  
The dimension of the $d$-th graded part 
$(R_w)_d$ of $R_w$ is equal to 
the 
\emph{stretched} Kostka number $K(d\lambda,d\mu)$ where  
$\lambda = (|w|/2, |w|/2)$  
and  $\mu = w$ (see \ref{prop:ssytbasis}).  We give a closed 
formula for Kostka numbers of this form in Proposition~\ref{prop:Kostkapi}.
It was shown in \cite{KR86} and \cite{BGR04} that for 
partitions $\lambda$ and $\mu$ the function $K(d \lambda, d\mu)$ 
is a polynomial in $d$. 
Thus the Hilbert function gives a closed 
formula for the polynomials $K(d\lambda,d\mu)$ in this 
special case.

In \cite{1209.3689} Jakub Witaszek studies the multigraded Poincar{\'e}-Hilbert
series of the homogeneous coordinate ring of the Pl{\"u}cker embedding 
of the Grassmannian $G(2,n)$ 
for a certain $\N^n$-grading. We obtain a closed formula for the multigraded 
Hilbert function and for the Poincar{\'e}-Hilbert 
series, see Remark~\ref{rem:poincare}.

For a field $\k$, recall that a graded $\k$-algebra $R$ is \emph{Koszul} if 
$\k$ admits a linear free resolution as an $R$-module. 
If $R=\k[X_0, \ldots, X_N]/I$, then the existence of a quadratic 
Gr{\"o}bner basis for $I$ implies that $R$ is Koszul, which 
in turn implies that $I$ is generated by quadratic equations. 
In \cite{KeelTevelevKoszul}, Keel and Tevelev show that the section 
ring of the log-canonical line bundle on $\overline{M}_{0,n}$ 
is Koszul.  
However, while  for $w=1^8$, $I_w$ is generated 
by quadratic equations, we show in 
Example~\ref{ex:n8notKoszul} that $R_w$ is not Koszul. 

In general, high enough Veronese subrings of graded rings are Koszul
\cite{BackelinFroberg85, Koszul}, and up to a linear transformation, they 
admit a quadratic Gr{\"o}bner basis \cite{EisenbudReevesTotaro}.  
We show that for $R_w$ already the second Veronese subring 
satisfies these properties. 

\begin{theorem}\label{thm:main}
	Assume $w\in (2\Z)^n$. Then $I_w$ admits a squarefree quadratic 
	Gr{\"o}bner basis. In particular, $R_w$ is Koszul. 
\end{theorem}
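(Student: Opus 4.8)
The plan is to exhibit a SAGBI (toric) degeneration of $R_w$ and then transfer a squarefree quadratic Gr\"obner basis from the resulting toric ideal back up to $I_w$. By Kempe's theorem \cite{Kempe} and \cite[Theorem 2.3]{HMSV}, $R_w$ is generated in degree one by the bracket monomials of multidegree $w$, that is, products $\prod_k [i_k j_k]$ in which the index $i$ occurs exactly $w_i$ times; these generators are in bijection with the semistandard tableaux of shape $(|w|/2,|w|/2)$ and content $w$. Writing $R_w = \k[X_T]/I_w$ with $T$ ranging over such tableaux, the first step is to fix a diagonal term order on the ambient bracket (Pl\"ucker) polynomial ring and invoke the straightening law of standard monomial theory for $G(2,n)$: every product of brackets rewrites uniquely as a nonnegative combination of standard (sorted) bracket monomials, whose leading terms are the tableaux themselves. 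This shows the bracket generators form a SAGBI basis, so the initial algebra $\inn(R_w)$ is the affine semigroup ring attached to the lattice points of a Gelfand--Tsetlin-type polytope $P_w$.

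For two-row shapes this polytope is combinatorially transparent: a tableau of shape $(|w|/2,|w|/2)$ and content $w$ is recorded by the numbers $a_i$ of entries equal to $i$ in the top row, and $P_w$ is cut out by $0\le a_i\le w_i$ and $\sum_i a_i = |w|/2$, together with the interlacing partial-sum inequalities enforcing column-strictness. The hypothesis $w\in(2\Z)^n$ enters precisely here: when every $w_i$ is even, the bounds and the value $|w|/2$ are integers, $P_w$ is a genuine lattice polytope whose degree-one lattice points are exactly the generators, and $P_w$ carries the structure of a Hibi-type (marked order) polytope. The second step is thus to identify $\inn(R_w)$ with the Hibi ring of $P_w$.

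The third step, and the combinatorial heart of the argument, is to prove that the toric ideal of $P_w$ has a squarefree quadratic Gr\"obner basis. I would do this via the sorting operator: given two generators $S,T$, sort the multiset of their columns into a canonical pair $(S',T')$, and take as relations the binomials $X_S X_T - X_{S'}X_{T'}$. Standard sorting arguments, as for Hibi rings and for $G(2,n)$, then show that these binomials form a Gr\"obner basis with squarefree quadratic leading terms, equivalently that $P_w$ admits a regular unimodular flag triangulation whose minimal nonfaces are edges. The even-weight hypothesis is exactly what guarantees that the sorted pair $(S',T')$ again consists of genuine lattice points of $P_w$, with no half-integral obstruction; this is the point at which, for general $w$, one is forced instead to pass to the second Veronese subring.

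Finally, I would lift the result. By the general principle that a Gr\"obner basis of $\inn(I_w)=I_{\inn(R_w)}$ lifts to a Gr\"obner basis of $I_w$ with the same leading terms, the squarefree quadratic Gr\"obner basis of the toric ideal yields one for $I_w$; and a quadratic Gr\"obner basis forces $R_w$ to be Koszul. The main obstacle I anticipate is the third step: showing that the sorting relations are a \emph{complete} Gr\"obner basis, not merely that their leading terms are squarefree quadrics, requires verifying that all $S$-pairs reduce to zero, i.e.\ that the interlacing inequalities defining $P_w$ are compatible with column-sorting, and it is exactly here that the evenness of $w$ must be used with care.
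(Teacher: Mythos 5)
Your overall skeleton---Kempe generation in degree one, a SAGBI degeneration of $R_w$ whose initial algebra is the semigroup ring of a Gelfand--Tsetlin-type polytope, and the transfer of a squarefree quadratic Gr\"obner basis from the toric ideal back to $I_w$---is exactly the paper's (Proposition~\ref{prop:ssytbasis}, Lemma~\ref{lem:lm}, Proposition~\ref{prop:sagbi}, and the citations of Conca--Herzog--Valla and Eisenbud--Reeves--Totaro in the proof of Theorem~\ref{thm:main}). The gap is at the step you yourself call the combinatorial heart. The degree-one lattice points of $Q_w$ are \emph{not} a sortable set, and they carry no Hibi (marked order/distributive lattice) structure, even when every $w_i$ is even. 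Concretely, take $w=(2,2,2,2,2)$ and the two semistandard tableaux with top-row content vectors $\nu=(2,1,0,2,0)$ and $\nu'=(2,0,2,1,0)$ (top rows $1,1,2,4,4$ and $1,1,3,3,4$); both lie in $Q_w$. Sorting the product $x^{\nu}x^{\nu'}$ yields the pair $(2,1,1,1,0)$ and $(2,0,1,2,0)$, and the second point violates inequality \eqref{eq:4} at $\ell=3$: it corresponds to a filling with the repeated column $(3,3)$, which is not semistandard. Likewise, the componentwise min and max of $\nu$ and $\nu'$ have degrees $3$ and $7$, so the Hibi operations do not even preserve the fixed content: the distributive-lattice structure lives on Gelfand--Tsetlin patterns of fixed shape and \emph{varying} weight, and the fixed-weight slice is not a sublattice. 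So your proposed sorting binomials $X_SX_T-X_{S'}X_{T'}$ are not well-defined elements of the presentation, and your assertion that evenness of $w$ is ``exactly what guarantees that the sorted pair again consists of genuine lattice points'' is false---the counterexample above has all weights even.

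What the paper does instead is genuinely different at this point. It first changes coordinates (Lemma~\ref{lem:polytopeiso}) to the symmetric ``triangle-inequality'' polytope $P_w$ of Definition~\ref{def:P}, with reference lattice $(2\Z)^{n-3}$. Evenness of $w$ enters there and in Lemma~\ref{lem:normal}, whose proof rounds the coordinates of $\frac{1}{m}r$ to nearest \emph{even} integers using an ``admissible'' sign sequence chosen to respect the triangle inequalities; this yields both normality of $P_w$ and the balancing statement that any $v+v'$ can be rewritten as $u+u'$ with $|u(i)-u'(i)|\leq 2$. (This balancing lemma is the correct substitute for your sorting step, and it cannot be obtained by naive coordinatewise rounding: the sign choices must flip exactly when consecutive rescaled coordinates are odd and their sum equals $w_{i+1}$.) Second, the term order is not a sorting order: it refines total degree first by a Euclidean norm and then by grevlex, and the Gr\"obner basis consists of two families---norm-decreasing type A relations and norm-preserving ``tail-swap'' type B relations---whose completeness (Theorem~\ref{thm:GB}) is a delicate counting argument, with squarefreeness checked separately (Proposition~\ref{prop:rad}). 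Finally, a smaller but real gap: the straightening law alone does not show that the degree-one brackets form a SAGBI basis; that the initial algebra is generated in degree one is precisely normality of the polytope, i.e.\ Lemma~\ref{lem:normal}(i), which again requires the rounding argument rather than following formally from standard monomial theory.
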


Both theorems apply in all characteristics, and in fact more generally 
over the integers. Note that  
Theorem~\ref{thm:main} implies that for $w$  with $|w|$ odd, $I_w$ admits 
a squarefree quadratic Gr{\"o}bner basis, since  in this case $R_w=R_{2w}$  
by Proposition~\ref{prop:ssytbasis}. In particular, if $w=1^n$ with $n$ 
odd, then $I_w$ admits a quadratic Gr{\"o}bner basis. However, it is 
not known whether for $w=1^{10}$, $R_w$ is Koszul, see Remark~\ref{rem:Koszul?}.

As in \cite{HMSV, HMSV12}, our proof is based on a toric 
degeneration. This toric degeneration is 
a SAGBI degeneration, and we show that this toric degeneration admits a 
quadratic Gr{\"o}bner basis.  
After we shared our result with Manon, he was able to 
extend it to 
more general polytopes that arise as degenerations  
of the coordinate rings of the moduli stack of quasi-parabolic $\SL(2,\C)$ principal bundles on a generic marked projective curve in \cite[Theorem 1.10]{Manon12},
see 
Remark~\ref{rem:Manon}.

\begin{ack}
	We 
        benefited from discussions with many people, including  Federico 
	Ardila, Aldo Conca, Sergey Fomin, Nathan Ilten, Chris Manon, Sam Payne, Bernd Sturmfels, and Ravi Vakil. We would also like to thank the referee,  Diane Maclagan, and Burt 
	Totaro for 
helpful	comments on a previous version of this paper. Our main thanks is to 
	Vic Reiner who was shaping the direction 
	of this project. Part of this work was done at 
	the Institute of Mathematics and its Applications, at the Mathematisches Forschungsinstitut Oberwolfach, and at the Max-Planck-Institut f{\"u}r Mathematik, and we would like to thank these institutes 
	for providing a great research environment. 
\end{ack}

\section{The coordinate ring $R_w$ of $M_w$}\label{sec:prelims}
In this section we set up basic notation and describe the 
invariant ring $R_w$ in terms of certain semistandard Young tableaux.
Let $\k$ be a field, and 
let 
$$S = \k[x_1,y_1,x_2,y_2,\ldots,x_n,y_n],$$ which we
view as the set of polynomial functions on the space $\mathbb{A}^{2 \times n}$ of $2 \times n$ matrices with entries in the field $\k$:
$$\left( \begin{matrix} x_1 & \cdots & x_n \\ y_1 & \cdots & y_n
\end{matrix} \right).$$
The polynomial ring $S$ is  
graded by $\N^n$, where the degree of the monomial 
$\prod_{i=1}^n x_i^{a_i} y_i^{b_i}$ is equal to 
$(a_1 + b_1, a_2 + b_2, \ldots, a_n+b_n)$.  
Given $\br = (r_1, \ldots, r_n) \in \N^n$, let $S_{\br}$ denote the $\br$-th 
graded part of $S$.  Let $L_{\br} = \mathcal{O} (r_1, \ldots, r_n)$. 
Viewing $x_i,y_i$ as homogeneous coordinates for the $i$'th point 
in $(\Pj^1)^n$, we have 
$H^0\left(\left(\Pj^1\right)^n,L_{\br}\right)=S_{\br}$. 
The line bundle $L_r$ admits a linearization for the 
diagonal 
action of $\mathrm{SL}(2,\k)$ on $(\mathbb{P}^1)^n$,
(see \cite[Chapter 3.1]{MumfordGIT}) such that the 
induced action on the section ring $R(L_r)$ is given by 
matrix multiplication on the left. 
It is easy to see that for $i<j$  the polynomials 
\[ p_{ij} = \mathrm{det} \left(\begin{matrix} x_i & x_j \\ y_i& y_j\end{matrix}\right)\]
	are invariant under the $\mathrm{SL}(2,\k)$ action. 
	Note that $p_{i,j}$ are the Pl{\"u}cker 
	coordinates on the Grassmannian $G(2,n)$. 
	The First Fundamental Theorem of Invariant Theory 
says that 
they generate the ring of invariants $S^{\mathrm{SL}(2,\k)}$, \cite[Theorem 2.1]{Dolgachev}. Note that $S^{\mathrm{SL}(2,\k)}$ is the homogeneous coordinate 
ring of $G(2,n)$ in the Pl{\"u}cker embedding. 

For our purposes, it is most convenient  to study
this invariant ring using tableaux of shape $(k,k)$,  
\begin{equation}\label{eq:tau}
\tau = \tableau{i_1}{j_1} \tableau{i_2}{j_2} \cdots \tableau{i_k}{j_k},
\end{equation}
where $1\leq i_{\ell}, j_{\ell} \leq n$. 
A tableau  $\tau$ is called \emph{semistandard} if its entries are 
weakly increasing in the rows and strictly increasing in the 
columns, i.e., in our setting we have $i_1 \leq \cdots \leq i_k, j_1 \leq \cdots, \leq j_k$, 
and $i_1< j_1, \ldots, i_k < j_k$. 
The \emph{content} of a tableau $\tau$ is the $n$-tuple 
$w(\tau) = (w(\tau)_1, \ldots, w(\tau)_n)$, where $w(\tau)_i$ denotes 
the number of times $i$ occurs in $\tau$. 

To a semistandard tableau $\tau$ as in \eqref{eq:tau}
we associate the polynomial \[s_{\tau} = p_{i_1,j_1}p_{i_2,j_2}  \cdots 
p_{i_k,j_k}\in S^{\mathrm{SL}(2,\k)}.\] 
Note that $s_{\tau}$ is homogeneous of degree
$w(\tau)$.

The following proposition is an algebraic incarnation of 
the Gel'fand-MacPherson correspondence \cite{GelfandMacPherson}.
Let $R_w$ be the invariant ring of  the introduction and let 
$|w|=w_1+\cdots +w_n$.

\begin{proposition}\label{prop:ssytbasis}
	The polynomials $s_{\tau}$, where $\tau$ ranges over all 
	semistandard tableaux of shape 
	$(\frac{d|w|}{2},\frac{d|w|}{2})$
	with content $dw$ for some 
	$d\in \N$, form a vector 
	space basis for the invariant ring $R_w$.
\end{proposition}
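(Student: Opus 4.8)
The plan is to reduce the statement to the classical standard-monomial (straightening) theory for the Grassmannian $G(2,n)$ and then to keep track of multidegrees. First I would identify the graded pieces of $R_w$ with multidegree components of the Plücker ring. By definition $(R_w)_d = H^0\big((\Pj^1)^n, L_w^d\big)^{\SL(2,\k)}$, and since $L_w^d = L_{dw}$ we have $H^0\big((\Pj^1)^n, L_w^d\big) = S_{dw}$, the multidegree-$dw$ part of $S$. Because the chosen linearization makes $\SL(2,\k)$ act by left multiplication and the $p_{ij}$ are invariant, taking invariants gives $(R_w)_d = (S_{dw})^{\SL(2,\k)}$, the multidegree-$dw$ part of the invariant ring $S^{\SL(2,\k)}$, which by the First Fundamental Theorem is the Plücker coordinate ring of $G(2,n)$.

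Next I would invoke the standard monomial basis theorem for $G(2,n)$: the polynomials $s_\tau$, as $\tau$ ranges over all semistandard two-row tableaux (of every shape $(k,k)$), form a $\k$-vector space basis of $S^{\SL(2,\k)}$. This is the characteristic-free straightening law of Hodge, holding over $\Z$ and hence in every characteristic, which is consistent with the integral claims of the paper. Spanning follows by repeatedly applying the quadratic Plücker relations to rewrite any product of Plücker coordinates as a $\Z$-linear combination of standard monomials; this straightening algorithm terminates because each step strictly decreases a suitable order on tableaux. Linear independence is the separate, more delicate input.

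The remaining step is purely combinatorial. Each $s_\tau = p_{i_1,j_1}\cdots p_{i_k,j_k}$ is multihomogeneous of degree $w(\tau)$, so since the $s_\tau$ form a \emph{homogeneous} basis, for every $r \in \N^n$ the set of $s_\tau$ with $w(\tau) = r$ is a basis of $(S_r)^{\SL(2,\k)}$. Specializing to $r = dw$ and observing that the total content of a shape-$(k,k)$ tableau equals $2k$, the condition $w(\tau) = dw$ forces $2k = |dw| = d|w|$, i.e.\ $k = d|w|/2$ and shape $(d|w|/2, d|w|/2)$; in particular there are no such tableaux when $d|w|$ is odd, which matches the vanishing recorded in Theorem~\ref{thm:hilbertfn}. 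Taking the union over all $d \geq 0$ and using $R_w = \bigoplus_{d \geq 0} (R_w)_d$ assembles these into a vector space basis of $R_w$.

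I expect the only genuine obstacle to be the standard monomial basis itself, and concretely the linear independence of the $s_\tau$ in a characteristic-free way, since spanning via straightening is algorithmic and the multidegree bookkeeping is routine. If one prefers a self-contained argument, linear independence can be reduced to the fact that the leading terms of the $s_\tau$ under a suitable monomial order are pairwise distinct; this initial-term viewpoint also foreshadows the SAGBI/toric degeneration exploited later in the paper.
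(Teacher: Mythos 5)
Your proposal is correct and takes essentially the same route as the paper: both reduce to the characteristic-free semistandard (standard monomial) basis of the Pl\"ucker algebra $S^{\mathrm{SL}(2,\k)}$ and then select the elements of multidegree $dw$, with the shape $(d|w|/2,d|w|/2)$ forced by content bookkeeping. The only cosmetic difference is that the paper performs this selection by taking invariants under the torus $T_w$ (using that the $s_\tau$ are simultaneous eigenvectors), whereas you invoke the $\N^n$-grading directly --- an equivalent formulation that the paper itself records in Remark~\ref{rem:Veronesesub}.
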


\begin{proof}
	Let $X=(\Pj^1)^n$. 
The torus
$T_w = \{\mathrm{diag}(t_1,\ldots,t_n) \in \mathrm{GL}(n,\k) \mid t_1^{w_1} t_2^{w_2} \cdots t_n^{w_n} = 1\}$
acts on the right of $\mathbb{A}^{2 \times n}$  
by matrix multiplication inducing an action on $S$ such that 
$S^{T_w} = \oplus S_{dw}.$ 
Thus we obtain
\[R_w = \left( \bigoplus_d H^0\left(\left(\Pj^1\right)^n, L_w^d\right) 
\right)^{\mathrm{SL}(2,\k)} =  
\left( \bigoplus_d S_{dw}\right)^{\mathrm{SL}(2,\k)}
= \left(S^{T_w}\right)^{\mathrm{SL}(2,\k)}.\]

Since the actions of $\mathrm{SL}(2,\k)$ and $T_w$ commute, 
we have
$(S^{T_w})^{\mathrm{SL}(2,\k)} = (S^{\mathrm{SL}(2,\k)})^{T_w}$. 
Now, $S^{\mathrm{SL}(2,\k)}$ has a vector space basis 
consisting of $s_{\tau}$ where $\tau$ ranges 
over semistandard Young tableaux of shape $(k,k)$, see for example \cite[Theorem 2.3]{Dolgachev}. 
Let $f = \sum_{\tau} a_{\tau}s_{\tau} \in S^{\mathrm{SL}(2,\k)}$, where $\tau$ runs over semistandard tableaux. 
Since the action of $T_w$ is linear, and 
the $s_{\tau}$  are linearly independent, $f$ is invariant under $T_w$ if and only if every $s_{\tau}$ is invariant. 
Note that for a tableau $\tau$ with content
$w(\tau)$, we have $t \cdot s_{\tau} = t_1^{w(\tau)_1}\cdots t_n^{w(\tau)_n}s_{\tau}$. 
In particular, $s_{\tau}$ is 
invariant if and only if 
there is $d$ with $w(\tau) = dw$. 
The claim follows. 
\end{proof}

In particular, when $w=1^n$ and $n=2m$ even, the dimension of the space 
of lowest degree invariants in $R_w$ is the Catalan number $C_m$.

\begin{remark}
\label{rem:Veronesesub}
One can view $R_w$ as a multigraded  Veronese subring of the Pl{\"u}cker 
algebra. 
The Pl{\"u}cker algebra $S^{\mathrm{SL}(2,\k)}$ admits a 
$\N^n$-grading
determined by the weight 
under the action of the diagonal 
torus $T=\mathrm{diag}(t_1, \ldots, t_n)$ 
via $t\cdot p_{i,j} = t_it_jp_{i,j}$. Then for a tableau $\tau$ with 
content $w(\tau)$, we have $t\cdot s_{\tau} = t^{w(\tau)}s_{\tau}$ and thus
we can conclude as in the proof of Proposition~\ref{prop:ssytbasis} that
\[\left(S^{\mathrm{SL}(2,\k)}\right)_w 
= \langle\{s_{\tau} \mid \tau \textrm{ is semistandard  of shape }(|w|/2,|w|/2)\textrm{ with filling }w(\tau) = w\}\rangle.\] Here $\langle \cdots \rangle$ denotes the span as a vector space over $\k$. In particular, 
$\left(S^{\mathrm{SL}(2,\k)}\right)_w = 0$ if $|w|$ is odd. It then follows 
that 
$R_w = \bigoplus _{d\in \N} \left( S^{\mathrm{SL}(2,\k)}\right)_{dw}$. 
\end{remark}

\begin{definition}\label{def:partition}
	To  a tableau $\tau$  of shape $(k,k)$
	is associated a partition 
	$\nu_{\tau} = (\nu_1, \ldots, \nu_n)$ of $d$,  
	the content of the first row of $\tau$, i.e.,  
	$\nu_i = |\{j \mid i_j = i\}|$. 
\end{definition}
The following Lemma can be easily deduced from the 
discussion in \cite[Section 3]{HMSV05}. We include a sketch of the 
 proof for the 
convenience of the reader. 

\begin{lemma}\label{lem:Kostkafnu}
	The semistandard Young tableaux of shape $(|w|/2,|w|/2)$
	with filling $w$ 
	are in bijection with  partitions 
	$\nu=(\nu_1, \ldots, \nu_n)$ of $|w|/2$  satisfying 
	\begin{align}
		0\leq \nu_{\ell} &\leq w_{\ell} \textrm{ and} \label{eq:3}	\\
		2(\nu_1 + \cdots + \nu_{\ell-1}) + \nu_{\ell} & \geq w_1 + \cdots + w_{\ell} \label{eq:4}
	\end{align}
	for $1\leq \ell \leq n$. 
These conditions imply  $\nu_1 = w_1$ and $\nu_n =0$.
\end{lemma}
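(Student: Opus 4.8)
The plan is to exhibit the bijection explicitly: send a semistandard tableau $\tau$ of shape $(|w|/2,|w|/2)$ and content $w$ to the content vector $\nu=\nu_\tau$ of its first row, and recover $\tau$ from $\nu$ by sorting. The key structural observation is that in a semistandard tableau each row is weakly increasing, hence a row is completely determined by the multiset of its entries. So the first row is determined by the numbers $\nu_i$ of occurrences of each symbol $i$, and the second row is the weakly increasing word whose content is $w-\nu$ (the remaining entries, since the total content is $w$). Thus $\tau\mapsto\nu$ and $\nu\mapsto\tau$ will be mutually inverse once we check their images land where claimed, and the real content of the lemma is to identify precisely which vectors $\nu$ occur.

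First I would record the easy constraints. Since $\nu_i$ counts occurrences of $i$ in the first row and $w_i$ is the total number of occurrences of $i$, the second row contains $w_i-\nu_i\ge 0$ copies of $i$, which gives $0\le \nu_i\le w_i$, i.e.\ \eqref{eq:3}; and since the first row has $|w|/2$ boxes, $\nu$ is a composition of $|w|/2$. Conversely, given any $\nu$ satisfying \eqref{eq:3}, the prescribed row contents $\nu$ and $w-\nu$ are both nonnegative and each sums to $|w|/2$, so sorting them into weakly increasing words yields a genuine filling of shape $(|w|/2,|w|/2)$ with content $w$ and weakly increasing rows.

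The hard part, though entirely elementary, is translating the column-strictness $i_\ell<j_\ell$ into \eqref{eq:4}, and the anticipated pitfall is an off-by-one shift between the strict column inequality and a weak count inequality. I would isolate the following sorting lemma: if $a_1\le\cdots\le a_k$ and $b_1\le\cdots\le b_k$ are weakly increasing, then $a_\ell<b_\ell$ for all $\ell$ if and only if $\#\{i:a_i\le t\}\ge \#\{i:b_i\le t+1\}$ for every integer $t$. One direction is immediate from sortedness: if $a_\ell<b_\ell$ for all $\ell$ and $b_i\le t+1$ then $a_i<b_i\le t+1$, so $a_i\le t$. For the other direction, any $\ell$ with $a_\ell\ge b_\ell$ yields at $t=b_\ell-1$ a left count at most $\ell-1$ and a right count at least $\ell$, violating the inequality. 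Applying this with $a$ the first row and $b$ the second row, the left count is $\nu_1+\cdots+\nu_t$ and the right count is $(w_1-\nu_1)+\cdots+(w_{t+1}-\nu_{t+1})$; writing $\ell=t+1$ and rearranging turns the inequality into $2(\nu_1+\cdots+\nu_{\ell-1})+\nu_\ell\ge w_1+\cdots+w_\ell$, which is exactly \eqref{eq:4}. (Thresholds $t<0$ and $t\ge n$ give trivial inequalities, so $1\le\ell\le n$ captures everything.)

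Finally I would extract the two stated consequences from the extreme cases of \eqref{eq:4}: the case $\ell=1$ reads $\nu_1\ge w_1$, which with \eqref{eq:3} forces $\nu_1=w_1$; the case $\ell=n$, using $\nu_1+\cdots+\nu_n=|w|/2$, reads $|w|-\nu_n\ge |w|$, hence $\nu_n\le 0$ and so $\nu_n=0$. This identifies the image of the bijection with the set of $\nu$ satisfying \eqref{eq:3} and \eqref{eq:4}, completing the proof.
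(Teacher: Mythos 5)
Your proposal is correct and follows essentially the same route as the paper: the same bijection (first-row content vector $\nu$, with the tableau recovered by sorting $\nu$ and $w-\nu$ into weakly increasing rows), and the same translation of column-strictness into the counting inequality $\nu_1+\cdots+\nu_{\ell-1}\geq (w_1-\nu_1)+\cdots+(w_\ell-\nu_\ell)$, which rearranges to \eqref{eq:4}. The only difference is that the paper asserts this equivalence without detail, while you justify it carefully via your sorting lemma — a worthwhile filling-in of the sketch, but not a different argument.
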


\begin{proof}
        To a tableau with filling $w$ and increasing rows
	is associated a partition of 
	$|w|/2$ by Definition~\ref{def:partition}.
	Conversely,
	to a partition $\nu$ satisfying \eqref{eq:3},
	we associate a tableau
	$\tau$  of shape $(\frac{|w|}{2},\frac{|w|}{2})$
	by filling the first row with $\nu_1$ 1's, $\nu_2$ 2's, 
	etc., and the second row with $(w_1-\nu_1)$ 1's, $(w_2 -\nu_2)$
	2's, etc. 
	By construction the rows 
	of this tableau are increasing and it has filling 
	$w$. These associations are inverse to each other.
	Moreoever, $\tau$ is 
	semistandard if and only if $\nu_1 + \cdots + \nu_{\ell-1}
	\geq (w_1 - \nu_1) + \cdots + (w_{\ell}-\nu_{\ell}) $ for
$1\leq \ell\leq n$. This condition is equivalent to 
\eqref{eq:4}. \end{proof}

\section{The Hilbert polynomial and degree of $M_w$}
In this section we will prove the formulas for the Hilbert function 
of $R_w$ and the degree of $M_w$ of Theorem~\ref{thm:hilbertfn}.
Our techniques are similar to those of Howe
\cite[5.4.2.3.]{Howe88} who computed the case when 
$w=1^n$.

Let
$\lambda = (|w|/2, |w|/2)$
and $\mu=w$. 
For partitions $\lambda$ and $\mu$ the Kostka 
numbers $K(\lambda, \mu)$ are defined 
to be the number of semistandard Young tableaux 
of shape $\lambda$ and content $\mu$. For a partition 
$\lambda = (\lambda_1, \ldots, \lambda_s)$, we let 
$d\lambda = (d\lambda_1, \ldots, d\lambda_s)$. 
Note that  by Proposition \ref{prop:ssytbasis} 
the dimension of $(R_{w})_d$ is equal to 
$K(d\lambda, d\mu)$.

In order to compute 
the Hilbert polynomial, we give a formula for these 
particular Kostka numbers.
The main step in proving this formula is to set up a relationship 
between the Kostka numbers and numbers of the corresponding 
partitions of Definition~\ref{def:partition}. 
We let  
$\Pi(n,\infty,k)=\{(\nu_1, \ldots, \nu_n) \mid 0\leq \nu_i \textrm{ for } i\in [n] \textrm{ and } \nu_1 + \cdots + \nu_n=k\}$,
and let 
\begin{equation}\label{eq:pi}
\pi(n,\infty,k) = |\Pi(n,\infty,k)| = \binom{n-1+k}{n-1}.
\end{equation}
Let
$w =(w_1, \ldots, w_n) \in \N^n$. We let 
$\Pi(n,w,k)=\{(\nu_1, \ldots, \nu_n) 
\mid 0\leq \nu_i \leq w_i \textrm{ for } i\in [n] \textrm{ and } \nu_1+\cdots + \nu_n = k\}$ 
and let $\pi(n,w,k) = 
|\Pi(n,w,k)|$. 
For a subset $I\subseteq [n]=\{1, \ldots, n\}$, we let 
$\Pi(n,w_I,k)
=\{(\nu_1, \ldots, \nu_n) \in \Pi(n,\infty,k) \mid 0\leq \nu_i \leq w_i 
\textrm{ for } i\in I\}$ 
and $\pi(n,w_I,k) = |\Pi(n,w_I,k)|$. 

\begin{lemma}\label{lem:pi} 
For any $I\subseteq [n]$, 
\begin{eqnarray*} 
\pi(n,w_I,k) &=& \sum_{J\subseteq I}(-1)^{|J|}\pi\(n,\infty,k-\(|w_{J}|+|J|\)\)\\ 
&=& \sum_{J\subseteq I} (-1)^{|J|}\binom{n-1+k-\(|w_{J}|+|J|\)}{n-1}.
\end{eqnarray*}
\end{lemma}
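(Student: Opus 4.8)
The plan is to prove the lemma by inclusion--exclusion on the upper-bound constraints indexed by $I$. First I would recall the combinatorial meaning of the right-hand side: $\pi(n,\infty,k)$ counts the lattice points $(\nu_1,\dots,\nu_n)$ with all $\nu_i \geq 0$ and $\sum_i \nu_i = k$, and the stars-and-bars identity \eqref{eq:pi} gives $\pi(n,\infty,k) = \binom{n-1+k}{n-1}$. By definition $\Pi(n,w_I,k)$ is obtained from $\Pi(n,\infty,k)$ by imposing the additional constraints $\nu_i \leq w_i$ for each $i \in I$, while the coordinates $\nu_i$ with $i \notin I$ remain merely nonnegative. This is exactly the setting in which an inclusion--exclusion over the ``forbidden'' events makes the upper bounds disappear.

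Concretely, for each $i \in I$ let $B_i$ denote the set of points of $\Pi(n,\infty,k)$ that violate the $i$-th bound, that is, those with $\nu_i \geq w_i + 1$. Then $\Pi(n,w_I,k) = \Pi(n,\infty,k) \setminus \bigcup_{i \in I} B_i$, so the inclusion--exclusion principle gives
\[
\pi(n,w_I,k) = \sum_{J \subseteq I} (-1)^{|J|} \left| \bigcap_{i \in J} B_i \right|.
\]
The key computation is to evaluate $\left|\bigcap_{i \in J} B_i\right|$ for a fixed $J \subseteq I$. I would do this by the standard shift of variables: a point lies in $\bigcap_{i\in J} B_i$ precisely when $\nu_i \geq w_i + 1$ for every $i \in J$, so setting $\nu_i' = \nu_i - (w_i+1)$ for $i \in J$ and $\nu_i' = \nu_i$ otherwise gives a bijection onto the nonnegative integer solutions of $\sum_i \nu_i' = k - \sum_{i \in J}(w_i + 1) = k - (|w_J| + |J|)$. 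Hence $\left|\bigcap_{i\in J} B_i\right| = \pi\!\left(n,\infty,\, k - (|w_J|+|J|)\right) = \binom{n-1+k-(|w_J|+|J|)}{n-1}$, and substituting this into the displayed sum yields both asserted equalities simultaneously.

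The argument is essentially routine, and the only point requiring genuine care is the interpretation of the binomial coefficients when $k - (|w_J|+|J|)$ is negative. I would settle this at the outset by adopting the combinatorial convention that $\pi(n,\infty,m)$, the number of nonnegative solutions of $\sum_i \nu_i = m$, equals $0$ whenever $m < 0$, and reading $\binom{n-1+m}{n-1}$ accordingly. Under this convention the terms with $|w_J| + |J| > k$ vanish, mirroring exactly the fact that the corresponding intersection $\bigcap_{i \in J} B_i$ is empty because one cannot subtract more than $k$ units of weight from a sum equal to $k$. With the convention fixed there is no real obstacle remaining.
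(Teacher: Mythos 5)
Your proof is correct and is in essence the same inclusion--exclusion argument as the paper's: the paper establishes the identity by induction on $|I|$ via the recursion $\pi(n,w_I,k) = \pi(n,w_{I\smallsetminus \{j\}},k) - \pi(n,w_{I\smallsetminus \{j\}},k-w_j-1)$, whose justification is exactly your single-coordinate shift bijection, and unrolling that induction reproduces your direct sum over $J\subseteq I$ with the simultaneous shift. Your explicit handling of the convention that $\pi(n,\infty,m)=0$ (and the corresponding binomial coefficient vanishes) when $m<0$ is a point the paper leaves implicit, and you resolve it correctly.
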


\begin{proof} 
We proceed by induction on the cardinality of $I$. When 
$I=\emptyset$, the above claim is immediate. If $I\neq \emptyset$, let 
$j\in I$. Since
\[ \pi(n,w_I,k) = \pi(n,w_{I\smallsetminus \{j\}},k)
- \pi(n,w_{I\smallsetminus \{j\}},k-w_j-1),\]    
the claim follows from the induction hypothesis. The last equality 
follows from (\ref{eq:pi}).  \end{proof}

\begin{proposition}\label{prop:Kostkapi}
	Let $w \in \N^n$ and assume that $|w|$ is even. Then for $\lambda = (|w|/2, 
|w|/{2})$ and $\mu = w$, we have 
\begin{eqnarray*}
K(\lambda, \mu) &=& 
\pi\left(n,w,|w|/2\right)-
\pi\left(n,w,|w|/2-1\right)\\
&=& 
 \sum_{
 \substack{J\subseteq [n]\\ |w_J| < |w|/2}} (-1)^{|J|} \binom{|w|/2-
|w_J|+n-|J| -2}{n-2} 
\end{eqnarray*}
\end{proposition}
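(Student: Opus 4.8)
The plan is to prove the two displayed equalities separately. The first, $K(\lambda,\mu) = \pi(n,w,|w|/2) - \pi(n,w,|w|/2-1)$, is the substantive one; the second is a bookkeeping consequence of Lemma~\ref{lem:pi} together with Pascal's identity. Throughout write $k = |w|/2$.

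For the first equality I would argue via symmetric functions, which gives the cleanest route. Since a Kostka number depends only on the underlying partition of the content, $K(\lambda,\mu)$ equals the coefficient of the monomial $x^w = x_1^{w_1}\cdots x_n^{w_n}$ in the Schur polynomial $s_\lambda(x_1,\ldots,x_n)$ with $\lambda = (k,k)$: this is immediate from $s_\lambda = \sum_{\nu} K(\lambda,\nu)\,m_\nu$ and the fact that $m_\nu$ contributes $x^w$ with coefficient $1$ exactly when $\nu$ is the sorted rearrangement of $w$. The Jacobi--Trudi identity gives $s_{(k,k)} = \det\left(\begin{smallmatrix} h_k & h_{k+1} \\ h_{k-1} & h_k \end{smallmatrix}\right) = h_k^2 - h_{k-1}h_{k+1}$, so it remains only to extract the coefficient of $x^w$ from each term.

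The key computation is that for $a+b = |w|$ one has $[x^w](h_a h_b) = \pi(n,w,a)$: every monomial of degree $a$ in $x_1,\ldots,x_n$ occurs in $h_a$ with coefficient $1$, so the coefficient of $x^w$ in $h_a h_b$ counts the factorizations $x^w = x^\beta x^{w-\beta}$ with $0 \le \beta_i \le w_i$ and $|\beta| = a$, which is exactly $\pi(n,w,a)$. Applying this with $(a,b)=(k,k)$ and with $(a,b)=(k-1,k+1)$ yields $K(\lambda,\mu) = \pi(n,w,k) - \pi(n,w,k-1)$, as desired. One could instead prove this equality bijectively, building on the lattice description of tableaux in Lemma~\ref{lem:Kostkafnu}: conditions \eqref{eq:3}--\eqref{eq:4} realize the tableaux as lattice paths, and a reflection argument should match the paths violating \eqref{eq:4} with the points counted by $\pi(n,w,k-1)$. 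The obstacle there is that the content constraints $0\le \nu_\ell\le w_\ell$ organize each path into fixed blocks, and a naive reflection does not preserve this block structure; reconciling the reflection with the blocks is the delicate point that the Jacobi--Trudi route sidesteps.

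For the second equality I would substitute the two expansions from Lemma~\ref{lem:pi}, subtract them term by term over $J\subseteq[n]$, and apply Pascal's identity $\binom{m}{n-1}-\binom{m-1}{n-1} = \binom{m-1}{n-2}$ with $m = n-1+k-|w_J|-|J|$. This produces $\sum_{J\subseteq[n]}(-1)^{|J|}\binom{k-|w_J|+n-|J|-2}{n-2}$. Finally I would check that every term with $|w_J|\ge |w|/2$ vanishes: for such $J$ (necessarily $J\neq\emptyset$ once $|w|>0$) the upper index is strictly less than $n-2$, so the binomial is zero under the convention $\binom{c}{n-2}=0$ for $c<n-2$ that makes the $\pi$-counts correct; hence the sum restricts to $|w_J|<|w|/2$, giving the stated formula. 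The main obstacle overall is the first equality: once the coefficient-extraction identity $[x^w](h_a h_b)=\pi(n,w,a)$ is in hand, everything else is routine, with the only care needed being the convention check that legitimizes restricting the index set in the second equality.
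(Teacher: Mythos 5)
Your proposal is correct, but your route to the first (substantive) equality is genuinely different from the paper's. The paper argues bijectively: via Lemma~\ref{lem:Kostkafnu} it encodes the tableaux as the points $\nu\in\Pi(n,w,|w|/2)$ satisfying $f_\nu(i)=2\nu_1+\cdots+2\nu_{i-1}+\nu_i-(w_1+\cdots+w_i)\geq 0$ for all $i$, and then builds an explicit bijection $\phi$ from the points of $\Pi(n,w,|w|/2)$ that \emph{violate} this condition onto $\Pi(n,w,|w|/2-1)$, by subtracting $1$ at the \emph{last} index where $f_\nu$ attains its minimum, with inverse $\psi$ adding $1$ at the \emph{first} minimizer. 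This is exactly the ``reflection reconciled with the block structure'' that you flagged as delicate and chose to sidestep: the careful choice of minimizer indices $i_\nu$ and $j_{\nu'}$ is what keeps the maps inside the box $0\leq\nu_i\leq w_i$. Your alternative --- $K(\lambda,w)=[x^w]s_{(k,k)}$, Jacobi--Trudi $s_{(k,k)}=h_k^2-h_{k-1}h_{k+1}$, and the observation that $[x^w](h_ah_b)=\pi(n,w,a)$ whenever $a+b=|w|$ --- is correct and shorter, at the cost of invoking Jacobi--Trudi as a black box; its standard involution proof is essentially the combinatorial work the paper does by hand in this two-row case. The trade-off: the paper's argument is elementary and self-contained (in the spirit of Howe's computation it extends), while yours is cleaner and would generalize immediately to shapes with more rows via larger Jacobi--Trudi determinants. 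For the second equality you do exactly what the paper does (Lemma~\ref{lem:pi}, Pascal's identity, discard the terms with $|w_J|\geq|w|/2$); indeed you are somewhat more careful than the paper in spelling out the convention $\binom{c}{n-2}=0$ for integers $c<n-2$, which is needed both to make Lemma~\ref{lem:pi} literally true (so that $\pi(n,\infty,k)=0$ for $k<0$) and to justify restricting the sum.
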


\begin{proof}
For the first equality, we need to express the Kostka numbers in terms of 
partitions.  For  $\nu \in \R^n$, we define a function 
$f_{\nu} \colon [n] \to \R$ by 
\begin{equation}\label{eq:fnu}
 f_{\nu}(i) = 
2\nu_1 + \cdots + 2 \nu_{i-1} + \nu_i -(w_1 + 
\cdots + w_i) 
\end{equation}
for  $1\leq i \leq n$.
Then Lemma \ref{lem:Kostkafnu} implies that for
$w\in \Z^n$ with $|w|$ even, 
$\lambda = (|w|/2, |w|/2)$, and $\mu = w$ we have 
\begin{equation}\label{eq:Kostkafnu}
 K(\lambda, \mu) = |\{\nu \in 
\Pi\left(n,w,|w|/2\right)\mid  f_{\nu}(i) \geq 0 
\text{ for all } 1\leq i\leq n\}|.
\end{equation}
For $\nu \in \Z^n$ we let 
$ m_{\nu} = \min\{f_{\nu}(j)\mid j\in [n]\}$ and $i_{\nu} = 
\max \{ j \mid f_{\nu}(j) = m_{\nu}\}$ and define 
\begin{eqnarray*}\phi \colon \{\nu \in \Pi\left(n,w,|w|/2\right) 
\mid \exists\ i \text{ such that } f_{\nu}(i)< 0\}  &\to& \Pi\left(n,w,|w|/2-1\right)
\\ 
(\nu_1, \ldots, \nu_n) &\mapsto& (\nu_1, \ldots, \nu_{i_{\nu}} - 1, 
\ldots, \nu_n).
\end{eqnarray*}
We claim that $\phi$ is well-defined and gives  a bijection. 
Note that the first equality then follows from 
 this claim together with \eqref{eq:Kostkafnu}.

The following equalities follow easily from the definition of $f_{\nu}$:
\begin{eqnarray}
\label{eq:fnu2} f_{\nu}(i+1) & = & f_{\nu}(i) +\nu_i - w_{i+1} + \nu_{i+1}\\
\label{eq:fnu1} f_{\nu}(n) & = &2|\nu| - |w| - \nu_n. 
\end{eqnarray}
To see that $\phi$ is well defined, we have to show that 
$\nu_{i_{\nu}} > 0$. When $i_{\nu} <n$  then $f_{\nu}(i+1)
> f_{\nu}(i)$ and (\ref{eq:fnu2}) implies $\nu_i > 
w_{i+1}-\nu_{i+1} \geq 0$. If $i_{\nu} = n$, then 
$f_{\nu}(n) = m_{\nu} < 0$, since there exists $i$ such that 
$f_{\nu}(i)<0$ and $m_{\nu}\leq f_{\nu}(i)$.  Since 
$|\nu| = |w|/2$, 
$f_{\nu}(n) = 
-\nu_n$ by (\ref{eq:fnu1}), so we have $\nu_n > 0$.

To see that $\phi$ is a bijection, we exhibit the inverse map. 
For $\nu' \in \Pi\left(n,w,|w|/2-1\right)$, we let 
 $j_{\nu'} = \min \{ i \mid f_{\nu'}(i)= m_{\nu'}\}$ and define 
\[ \psi \colon (\nu'_1, \ldots, \nu'_n) 
\mapsto (\nu'_1, \ldots, \nu'_{j_{\nu'}} + 1, \ldots \nu'_n).\]
We have
\begin{equation}\label{eq:fnu'}
f_{\psi(\nu')}(i) = 
\left\{ \begin{array}{lcll}
f_{\nu'}(i) &\geq& m_{\nu'}+1& \text{ if } i<j_{\nu'} \\
f_{\nu'}(i) + 1& =& m_{\nu'} +1 & \text{ if } i = j_{\nu'} \\
f_{\nu'}(i) + 2& \geq &m_{\nu'}+2 & \text{ if } i > j_{\nu'}.
\end{array}
\right.
\end{equation}

We have to show that $\psi$ is well-defined. 
If $j_{\nu'}>1$, then $f_{\psi(\nu')}(j_{\nu'}) = 
f_{\psi(\nu')}(j_{\nu'}-1) + \nu'_{j_{\nu'}-1} - w_{j_{\nu'}}
+ \nu'_{j_{\nu'}} +1$ by \eqref{eq:fnu2}. Plugging in the 
values from \eqref{eq:fnu'}, we see that $w_{j_{\nu'}} \geq 
\nu_{j_{\nu'}}+1$.
If $j_{\nu'}=1$, then $f_{\psi(\nu')}(1) = \nu'_1 + 1 -w_1 
= m_{\nu'} +1$. However, note that 
$m_{\nu'} \leq f_{\nu'}(n) \leq -2$ by \ref{eq:fnu1}, 
and so it follows that $\nu'_1 +2 \leq w_1$. 
Moreover, 
(\ref{eq:fnu'}) implies that $i_{\psi(\nu')} = j_{\nu'}$. 
One can check similarly that $j_{\phi(\nu)} = i_{\nu}$ and 
it follows that  $\phi$ and $\psi$ are inverse 
to each other.  

For the second equality, note that Lemma~\ref{lem:pi} implies that 
\begin{multline*}
\pi\left(n,w,|w|/2\right)-
\pi\left(n,w,|w|/2-1\right) \\
=\sum_{J\subseteq [n]}(-1)^{|J|}\left[\binom{n-1+|w|/2-(|w_{J}|+|J|)}{n-1}
-\binom{n-2+|w|/2-(|w_{J}|+|J|)}{n-1}
\right]
\end{multline*}
Using the identity $\binom{m}{n} - \binom{m-1}{n} = 
\binom{m-1}{n-1}$, one obtains the formula in the statement.  
Note that if $|w_J| \geq |w|/2$, the expression in the top of the binomial 
coefficient is less than $n-2$, so 
it suffices to sum over those $J\subseteq [n]$ 
such that $|w_J| < |w|/2$. 
\end{proof}

\begin{proof}[Proof of Theorem \ref{thm:hilbertfn}]
	Fix $w\in \N^n$, let
$\lambda = (|w|/2, |w|/2)$, and let $\mu = w$. 
It follows from Proposition~\ref{prop:ssytbasis} that 
$\dim (R_w)_d=0$ if $d|w|$ is odd and that 
$\dim (R_w)_d 
= K(d\lambda, d\mu)$ if $d|w|$ is even.
The 
formula  for $h(d)$ then follows from Proposition~\ref{prop:Kostkapi}. 

The formula for the degree of $M_w$ is obtained by 
computing the coefficient of $d^{n-3}$ in the Hilbert polynomial and 
multiplying by $(n-3)!$. 
\end{proof}
\begin{remark}\label{rem:poincare}
Our formula also implies a closed formula for the multigraded Hilbert function 
and Poincar{\'e}-Hilbert 
series of the coordinate ring of the Grassmannian $G(2,n)$ in the 
Pl{\"u}cker embedding with the 
multigrading described in Remark~\ref{rem:Veronesesub}. 
In \cite[Theorem 3.4.3]{1209.3689} Jakub Witaszek 
gives a recursive formula for the 
multigraded Poincar{\'e}-Hilbert series  
$\sum_{w\in\Lambda} \mathrm{dim}\left(S^{\mathrm{SL(2,\k)}}\right)_wz^w$
of the Pl{\"u}cker algebra for the multigrading described 
in Remark~\ref{rem:Veronesesub}.
He also obtains a combinatorial formula  for the Poincar{\'e}-Hilbert series.

Let 
$\Lambda=\{w \in \Z^n \mid |w|\in 2\Z\}$. 
By Remark~\ref{rem:Veronesesub}, we have  $\mathrm{dim}\left(S^{\mathrm{SL(2,\k)}}\right)_w = 
K(\lambda, \mu)$, where $\lambda = (|w|/2,|w|/2)$ and $\mu=w$. 
It follows that the support of the multigraded Hilbert function $\mathbf{h}$ 
is $\{ w\in \Lambda \mid |w|/2 \geq w_j \textrm{ for all } 1\leq j\leq n\}$. 
Then Proposition~\ref{prop:Kostkapi}  implies 
that for $w\in \Lambda$, 
\begin{equation*}
 \mathbf{h}(w) =\mathrm{dim}\left(S^{\mathrm{SL(2,\k)}}\right)_w =
\sum_{\substack{J\subseteq [n]\\|w_J|<|w|/2}} (-1)^{|J|} \binom{|w|/2-|w_J|+n-|J|-2}{n-2}.
\end{equation*}
So we obtain a closed formula for the multigraded Poincar{\'e}-Hilbert series.

A point $p=(p_1, \ldots, p_n) \in (\Pj^1)^n$ is stable (resp. semistable) for the $\mathrm{SL}(2,\k)$-linearization of $L_w$ if for all subsets of 
indices of colliding points $J=\{j\in [n] \mid p_j =p \textrm{ for some } p\}$ 
we have $|w_J| 
< |w|/2$ (resp. $|w_J|\leq |w|/2$),
see 
\cite[Chapter 3]{MumfordGIT}, 
\cite[Section 6]{Thaddeus96}, or \cite[Section 8]{Hassett03}.
Identifying $\N^n$ with the effective divisors on 
$(\Pj^1)^n$, we see that the fact that in the formula for the  multigraded Hilbert function 
we sum over those $J\subseteq [n]$ 
such that $|w_J| < |w|/2$ reflects the chamber structure for the 
GIT chambers whose walls are given by $|w_J|=|w|/2$.
In particular, the multigraded Hilbert function  is piecewise polynomial in 
$w\in \Lambda$ and the domains of polynomiality agree with the GIT chambers. 
\end{remark}

\begin{remark}
In the formula for the Hilbert polynomial of Theorem 
\ref{thm:hilbertfn}, 
the terms of degree $(n-2)$ cancel out since 
$\dim M_w = n-3$. Thus we obtain the following identity:
\[ 
\sum_{J\subseteq [n]} (-1)^{|J|} (|w|/2-|w_J|)^{n-2} = 0.
\]
\end{remark}

\begin{remark}
Let $g(d)$ denote the Hilbert function of $G(2,n)$ in the Pl{\"u}cker embedding. Recall the multigraded 
Hilbert function $\mathbf{h}$ for the Pl{\"u}cker embedding from 
Remark~\ref{rem:poincare}. Then we have 
\[ g(d) = \sum _{\substack{w \in \N^n \\ |w|=2d}} \mathbf{h}(w),\]
implying the identity 
\begin{multline}
\binom{n+d-1}{d}^2 - \binom{n+d}{d+1}\binom{n+d-2}{d-1} 
\\ = \sum _{\substack{w \in \N^n \\ |w|=2d}}
\sum_{\substack{J\subseteq [n]\\|w_J|<|w|/2}} (-1)^{|J|} \binom{|w|/2-|w_J|+n-|J|-2}{n-2}.
\end{multline}
\end{remark}

\begin{remark}
	While our formula counts semistandard tableaux, 
	it contains negative signs. It would be nice to 
	have a formula with positive coefficients. In fact, 
	King, Tollu and Toumazet 
	conjecture that for arbitrary $\lambda,\mu$ 
	the coefficients of the polynomial $K(d\lambda, d\mu)$ are 
	positive in \cite[Conjecture 3.2]{KTT}.
\end{remark}

\begin{remark}
 Narayanan shows in \cite[Theorem 1]{Narayanan} that the problem of computing Kostka 
 numbers $K(\lambda,\mu)$ is $\#P$-complete. 
 Note that for our formula, one has to compute first all subsets of $[n]$, 
 where $n$ is the length of $\mu$. 
\end{remark}

\begin{example}
	The formula in Theorem~\ref{thm:hilbertfn} shows that $\mathrm{deg}(M_4) =1$, 
	$\mathrm{deg}(M_6) = 3$, $\mathrm{deg}(M_8) = 40$, $\mathrm{deg}(M_{10}) = 1225$, $\mathrm{deg}(M_{12})=67956$, $\mathrm{deg}(M_{14})=5986134$, $\mathrm{deg}(M_{16}) = 769550496$,  
 so  
	this sequence agrees with  A012250 on Sloane's online encyclopedia of integer sequences \cite{sloaneA012250}, compare \cite[Section 2.15]{HMSV}. 
	Similarly, when  
	$w = (2,\ldots, 2)$, the degrees of $M_w$ 
	are  $\mathrm{deg}(M_{2^4}) = 2$, $\mathrm{deg}(M_{2^5})= 5$,
	$\mathrm{deg}(M_{2^6})=24$, $\mathrm{deg}(M_{2^7})=154$, $\mathrm{deg}(M_{2^8})=1280$,  $\mathrm{deg}(M_{2^9}) = 13005$, $\mathrm{deg}(M_{2^{10}})=156800$, $\mathrm{deg}(M_{2^{11}})=2189726$ which agrees with 
	sequence A012249 \cite{sloaneA012249}. 
\end{example}

The following example shows that while for $w=1^8$, 
$I_w$ is generated 
by quadratic equations by \cite{HMSV12}, the ring of invariants
$R_w$ is not Koszul. 

\begin{example}\label{ex:n8notKoszul}
	Recall that for a graded algebra $R$ the Hilbert series is given by $H(z) = \sum_{d=0}^{\infty}\mathrm{dim}(R_d)z^d$. Similarly, the Poincar{\'e} series is given  by 
	$P(z) = \sum_{i=0}^{\infty} \mathrm{dim}\mathrm{Tor}_i^{R}(\k,\k)z^i$. Let $\mathbf{P}(u,v) = \sum_{i=0}^{\infty} \mathrm{dim}\mathrm{Tor}_i^{R}(\k,\k)_ju^iv^j$.
	Then we have $H(z)\mathbf{P}(-1,z)=1$ by 
	\cite[Chapter 2, Proposition 2.1]{PolishchukPositselski}. 
	If $R$ is Koszul, then the 
	minimal free graded resolution of $\k$ over $R$ is linear,
	and so $P(uv) = \mathbf{P}(u,v)$. Then 
	$H(z)P(-z)=1$ if and only if $R$ is Koszul, see \cite[Theorem 1]{Froeberg99}.
In particular, the 
power series $H(-z)^{-1}=P(z)$ must have positive coefficients. 

	Note that for $w=1^8$, the Hilbert function 
	is given by 
	\[ h(d) = \sum_{j=0}^{3} (-1)^j \binom{8}{j} \binom{d(4-j)+6-j}{6},\]
	so the Hilbert series is 
	\begin{align*}
		H(z)&= 1 + 14z + 91z^2  + 364z^3  + 1085z^4  + 2666z^5  
		+ 5719z^6 \\ &\qquad+ 11096z^7 + 19929z^8 + O(z^9) \\
		&= \frac{1+8z+22z^2+8z^3+z^4}{(1-z)^6}.
	\end{align*}
	Then
	\begin{multline}
		H(-z)^{-1} = 1 + 14z + 105z^2  + 560z^3  + 2296z^4  + 
	6880z^5
	+8904z^6 \\ - 62320z^7 - 641704z^8 + O(z^9). \end{multline}

\end{example}

\begin{remark}\label{rem:Koszul?}
For $w=1^{10}$ it is not known whether $R_w$ is Koszul or whether the ideal of relations between the generators admits a quadratic Gr{\"o}bner basis. One can check that the first 800 coefficients of $H(-z)^{-1}$ are positive, however it is not known whether the relation $H(-z)P(z)=1$ is satisfied. In general one might hope that for $n$ large and $w=1^n$ the ring $R_w$ is Koszul or has other nice properties, such as Green's property $N_p$, see for example \cite[Section 1.8.D]{Lazarsfeld04a}. 
\end{remark}

\section{A SAGBI degeneration of $R_w$}

A crucial part in the proof of \cite[Theorem 1.1]{HMSV12} is 
the existence of toric degenerations of $M_w$ indexed by trivalent 
trees on $n$ leaves, see \cite[Section 3.3]{HMSV12}. 
The existence of one of these toric degenerations had been established by Foth and Hu in 
\cite[Theorem 3.2]{FothHu05}. 
In fact, the toric degenerations are torus quotients of the 
toric degenerations of the Grassmannian $G(2,n)$ studied by 
Sturmfels and Speyer  \cite{SpeyerSturmfels04} and by Gonciulea and Lakshmibai 
in \cite{GonciuleaLakshmibai96}. 
Manon mentions in \cite[Theorem 1.3.6]{Manon09} that 
the ring of invariants $R$ 
admits a  SAGBI degeneration.
In this section we will give an explicit description of this SAGBI 
degeneration, by taking the torus invariants of the SAGBI 
degeneration described in \cite[Section 14.3]{MillerSturmfels}.

Let $R$ be a finitely generated subalgebra of the polynomial ring $S=\k[x_1, \ldots, x_n]$, and let $\prec$
be a term order on $S$. Let $\inn_{\prec}(R)$ be the subalgebra 
of $S$ generated by the inital terms of the elements of $R$. 
Assume this subalgebra is finitely generated. (Note that 
this is rarely the case). A set of generators 
$\{f_1, \ldots, f_r\}$ is called a \emph{SAGBI basis} for $R$ with 
respect to $\prec$ if $\inn_{\prec}{f_1}, \ldots, \inn_{\prec}{f_r}$ generate 
$\inn_{\prec}(R)$. Let $I$ be the ideal of relations between 
the generators of $R$. Then $\inn_{\prec}(R) \cong R/\inn_{\prec}I$, 
see \cite[Proof of Corollary 2.1]{ConcaHerzogValla}.
Since $\inn_{\prec}(R)$ is a monomial algebra, it is toric, 
and therefore the existence of a SAGBI basis for an algebra implies the existence 
of a flat degeneration of this algebra to a toric algebra \cite[Theorem 15.17]{Eisenbud95}. 
Note that the toric algebra need not be normal. 

Now let $S=\k[x_1, \ldots, x_n,y_1, \ldots, y_n]$ and $R$ the 
ring of invariants as in Section \ref{sec:prelims}.
Let $\prec$ be the  the purely 
lexicographic term order with $x_1 \succ \cdots \succ x_n \succ y_1 \succ 
\cdots \succ y_n$. 

Recall  the polynomial $s_{\tau}$ associated to a $2\times n$ tableau $\tau = \tableau{i_1}{j_1} 
\tableau{i_2}{j_2} \cdots \tableau{i_r}{j_r}$. 
We also  associate a 
monomial $m_{\tau} = x_{i_1}\cdots x_{i_r}y_{j_1}\cdots 
y_{j_r} \in S$. 

\begin{lemma}\label{lem:lm}
	A monomial $m$ is a leading monomial of an element in $R$
	if and only if $m=m_{\tau}$, where $\tau$ is semistandard 
	with filling $dw$, for some $d$. In particular, the set of these $m_{\tau}$ 
 is a vector space basis for 
	$\inn_{\prec}(R)$.  
\end{lemma}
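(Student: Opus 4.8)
The plan is to reduce the statement to two facts: that the leading monomial of $s_\tau$ is exactly $m_\tau$, and that the assignment $\tau \mapsto m_\tau$ is injective on semistandard tableaux. Granting these, the lemma is immediate. By Proposition~\ref{prop:ssytbasis} the polynomials $s_\tau$, as $\tau$ ranges over semistandard tableaux with filling $dw$ for some $d$, form a vector space basis of $R$, and injectivity makes their leading monomials $m_\tau$ pairwise distinct. I would then invoke the standard fact that a vector space basis whose members have pairwise distinct leading monomials induces a basis of the associated space of initial terms; this yields at once the claimed characterization of the leading monomials of elements of $R$ and the basis statement for $\inn_\prec(R)$.

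First I would compute the leading term of a single Pl{\"u}cker coordinate. Writing $p_{ij} = x_i y_j - x_j y_i$ with $i<j$ and comparing the two monomials under the lexicographic order $x_1 \succ \cdots \succ x_n \succ y_1 \succ \cdots \succ y_n$, the variable $x_i$ precedes $x_j$, so $x_i y_j \succ x_j y_i$ and hence $\inn_\prec(p_{ij}) = x_i y_j$. Because $\prec$ is a term order, the initial term of a product is the product of the initial terms; applying this to $s_\tau = p_{i_1 j_1}\cdots p_{i_r j_r}$ gives
\[ \inn_\prec(s_\tau) = \prod_{\ell=1}^{r} \inn_\prec(p_{i_\ell j_\ell}) = \prod_{\ell=1}^{r} x_{i_\ell} y_{j_\ell} = m_\tau. \]

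Next I would verify injectivity. The monomial $m_\tau$ records, through the exponents of the $x$-variables, the content of the first row of $\tau$, and through the exponents of the $y$-variables, the content of the second row. Since $\tau$ is semistandard its rows are weakly increasing, so each row is uniquely reconstructed from its content; thus $\tau$ is recovered from $m_\tau$, and the map is injective. To finish, take $f \in R$ and expand $f = \sum_\tau a_\tau s_\tau$ in the basis of Proposition~\ref{prop:ssytbasis}. Because the $m_\tau$ are pairwise distinct, the leading terms $a_\tau m_\tau$ cannot cancel, so $\inn_\prec(f)$ is the $\prec$-largest $m_\tau$ with $a_\tau \neq 0$; in particular it equals $m_{\tau_0}$ for a semistandard $\tau_0$ with filling $dw$. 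Conversely, each such $m_\tau = \inn_\prec(s_\tau)$ with $s_\tau \in R$ is a leading monomial, giving the stated equivalence. Finally, the $m_\tau$ span $\inn_\prec(R)$ by definition of the initial algebra and are linearly independent as distinct monomials, hence form a basis.

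The computation itself is routine; the only point that genuinely requires care is the absence of cancellation among leading terms, and this is precisely what the injectivity of $\tau \mapsto m_\tau$ guarantees. The main conceptual obstacle, to the extent there is one, is ensuring that the basis of Proposition~\ref{prop:ssytbasis} interacts correctly with the term order, namely that distinct basis elements have distinct leading monomials, since without this property the leading monomial of a general invariant need not be one of the $m_\tau$.
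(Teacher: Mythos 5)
Your proof is correct and takes essentially the same approach as the paper: expand an invariant in the basis of Proposition~\ref{prop:ssytbasis}, identify $\inn_{\prec}(s_\tau)=m_\tau$, and use the injectivity of $\tau\mapsto m_\tau$ on semistandard tableaux to rule out cancellation among leading terms. The only cosmetic difference is that you get $\inn_{\prec}(s_\tau)=m_\tau$ from multiplicativity of initial terms, whereas the paper argues directly that every monomial of $s_\tau$ is $m_{\tau'}$ for a tableau $\tau'$ of the same shape and content, with $m_\tau$ the largest.
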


\begin{proof}
	By Proposition \ref{prop:ssytbasis}, the polynomials 
	$s_{\tau}$, where $\tau$ ranges over semistandard Young 
	tableaux of shape $2 \times \frac{d|w|}{2}$ with content
	$dw$ form a vector space basis of $R$. 
Note that every monomial occurring in $s_{\tau}$ is of the 
form $m_{\tau'}$, where $\tau'$ is a (not necessarily semistandard)
tableau with the  same shape and same 
content as $\tau$. Among those monomials, the largest 
with respect to the term order $\prec$ is $m_{\tau}$. In particular, 
$m_{\tau} \in \inn_{\prec}(R)$.  Moreover, the leading monomial of an element $\sum_{\tau \in T} a_{\tau} s_{\tau} \in R$ is
$m_{\tau'}$, where $m_{\tau'}$ is the largest monomial 
in $\{m_{\tau} \mid \tau \in T\, a_{\tau} \neq 0\}$ with respect to the 
term order $\prec$.  
Since to distinct semistandard tableaux are associated distinct monomials, 
the $m_{\tau}$
are also linearly independent.
See also  
       \cite[Lemma 14.13]{MillerSturmfels}. 
\end{proof}

\begin{definition}
	Let $w\in \N^n$. Let $Q_w \subset \R^n$ be the 
	polytope  defined by $\nu_1+ \cdots + \nu_n = \frac{|w|}{2}$
	and the inequalities
	\eqref{eq:3} and \eqref{eq:4}.
\end{definition}
\begin{example}\label{ex:Qw}
	Let $w=(2,2,2,2,2)$. Then the inequalities for 
	$Q_w$ imply $\nu_1=2, \nu_5=0$, and $\nu_4 = 3-\nu_2 -\nu_3$. Thus $Q_w$ is the 2-dimensional polytope given by the inequalities
	$0\leq \nu_2, \nu_3 \leq 2, 0\leq 3- \nu_2-\nu_3 \leq 2$, and $2\nu_2+\nu_3\geq 2$. From this we can see that  $Q_w$ is isomorphic to the convex hull of  
	$\langle (1,0),(2,0),(2,1),(1,2),(0,2)\rangle$. 
	
	\begin{figure}[htp]\label{fig:1}
		  \begin{minipage}{\textwidth}
			      \centering
			      \begin{tikzpicture}[scale=2]
					  \definecolor{fillColor}{gray}{0.9} 
					  \path[draw=black,thick,fill=fillColor] (1,0) -- (2,0) -- (2,1) -- (1,2) --(0,2) -- (1,0);
					  \draw[step=1cm] (-.1,-.1) grid (2.1,2.1);
					  \foreach \x in {0,1,2}{
					        \foreach \y in {0,1,2}{ 
						        \node[draw,circle,inner sep=2pt,fill] at (\x,\y) {};
								          } }
					  \path[draw=black,thick,fill=fillColor] (4,0) -- (5,1) -- (5,2) --(4,2) --(3,1)-- (4,0);
					  \draw[step=1cm] (2.9,-.1) grid (5.1,2.1);
					  \foreach \x in {3,4,5}{
					        \foreach \y in {0,1,2}{ 
						        \node[draw,circle,inner sep=2pt,fill] at (\x,\y) {};
								          } }
						    \end{tikzpicture}%
							\caption{The polytopes $Q_w$ of Example~\ref{ex:Qw} (with reference lattice $\Z^n$) and $P_w$  of Example~\ref{ex:Pw} (with reference lattice $(2\Z)^n$) for $w=(2,2,2,2,2)$.}
								        \end{minipage}
								\end{figure}
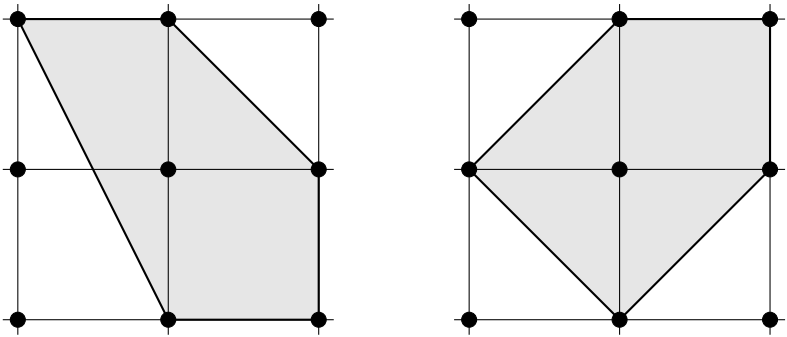
\end{example}

	Note that when $w=(1,1,1,1,1)$, $Q_w$ contains no lattice points; in particular, it is not a lattice polytope. However, it follows 
	from Lemma~\ref{lem:polytopeiso} and Lemma~\ref{lem:normal} that $2Q_w$ is 
	a lattice polytope for all $w$.

\begin{remark}
	Note that  $Q_w$ is a  Gelfand-Tsetlin polytope, see
	for example \cite{deLoeraMcAllister}.
\end{remark}

Recall that to a rational polytope $P$ is associated a 
graded monoid  $S_P=\{({\bf u},d)\mid  {\bf u}\in dP\cap \mathbb{Z}^n, d\in \N\}$ 
and an algebra $\k[S_P] = \langle x^{\bf{u}}z^d \mid ({\bf u},d) \in S_P\rangle$. 

\begin{proposition}\label{prop:sagbi}
        The subalgebra $\inn_{\prec}R$ 
	is isomorphic to 
        the polytopal 
	semigroup algebra $\k[S_{Q_w}]$. In particular, $\inn_{\prec}R$ is 
	finitely generated. 
\end{proposition}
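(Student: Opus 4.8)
The plan is to build an explicit isomorphism from the monomial basis of $\inn_{\prec}R$ identified in Lemma~\ref{lem:lm} onto the lattice points of the graded cone over $Q_w$, and then to check that it respects multiplication. First I would record the scaled form of Lemma~\ref{lem:Kostkafnu}: since the defining equality and the inequalities \eqref{eq:3} and \eqref{eq:4} are homogeneous of degree one in the pair $(w,\nu)$, we have $dQ_w = Q_{dw}$ as subsets of $\R^n$, and the same bijection shows that semistandard tableaux of shape $(d|w|/2,d|w|/2)$ with filling $dw$ correspond to lattice points $\nu \in dQ_w\cap\Z^n$, the correspondence sending $\tau$ to the partition $\nu_\tau$ of Definition~\ref{def:partition}. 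Under this bijection the monomial of Lemma~\ref{lem:lm} is $m_\tau = \prod_i x_i^{(\nu_\tau)_i} y_i^{dw_i - (\nu_\tau)_i}$, and the degree $d$ is recovered from $m_\tau$ as its total degree divided by $|w|$. Combined with Lemma~\ref{lem:lm} this gives a degree-preserving bijection between the vector space basis $\{m_\tau\}$ of $\inn_{\prec}R$ and the monomial basis $\{x^\nu z^d \mid (\nu,d)\in S_{Q_w}\}$ of $\k[S_{Q_w}]$. I therefore define $\Phi\colon \inn_{\prec}R \to \k[S_{Q_w}]$ on basis elements by $m_\tau \mapsto x^{\nu_\tau} z^d$ and extend linearly; it is a linear isomorphism by construction.

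The key step is to verify that $\Phi$ is a ring homomorphism. Because $\inn_{\prec}R$ is a subalgebra spanned by monomials, the product $m_\tau m_{\tau'}$ of two basis monomials is again a single monomial lying in $\inn_{\prec}R$, hence equals some basis monomial $m_{\tau''}$; explicitly $m_\tau m_{\tau'} = \prod_i x_i^{(\nu_\tau)_i + (\nu_{\tau'})_i} y_i^{(d+d')w_i - (\nu_\tau)_i - (\nu_{\tau'})_i}$, so its partition vector is $\nu_\tau + \nu_{\tau'}$ in degree $d+d'$. The point is that $\nu_\tau + \nu_{\tau'} \in (d+d')Q_w\cap\Z^n$: each of the conditions defining $dQ_w$ and $d'Q_w$ is additive, so adding the inequality \eqref{eq:4} for $\nu_\tau$ (in degree $d$) to the corresponding one for $\nu_{\tau'}$ (in degree $d'$) yields exactly the inequality for $\nu_\tau + \nu_{\tau'}$ in degree $d+d'$, and likewise for \eqref{eq:3} and the defining equality. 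Hence $\Phi(m_\tau m_{\tau'}) = x^{\nu_\tau + \nu_{\tau'}} z^{d+d'} = \Phi(m_\tau)\Phi(m_{\tau'})$, which matches the multiplication in the semigroup algebra $\k[S_{Q_w}]$. This additivity is the heart of the argument and the only place where the precise form of the Gelfand-Tsetlin inequalities is used; once it is in hand, $\Phi$ is an algebra isomorphism.

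Finally, finite generation of $\inn_{\prec}R$ follows immediately from the isomorphism with $\k[S_{Q_w}]$, since the polytopal semigroup algebra of a rational polytope is always finitely generated: by Gordan's lemma the graded semigroup $S_{Q_w}$ is finitely generated, being the set of lattice points of the rational polyhedral cone over $Q_w \times \{1\}$. I expect the main obstacle to be essentially bookkeeping rather than conceptual, namely confirming that the tableau-partition dictionary of Lemma~\ref{lem:Kostkafnu} scales correctly to filling $dw$ and that leading-monomial multiplication in $S$ translates precisely into addition of lattice points in the cone over $Q_w$. With the additivity of \eqref{eq:3} and \eqref{eq:4} established, the isomorphism $\inn_{\prec}R \cong \k[S_{Q_w}]$ and hence the finite generation are both immediate.
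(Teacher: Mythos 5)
Your proposal is correct and follows essentially the same route as the paper: both use the basis $\{m_\tau\}$ from Lemma~\ref{lem:lm}, send $m_\tau \mapsto x^{\nu_\tau}z^d$ via the tableau--partition dictionary of Definition~\ref{def:partition} and Lemma~\ref{lem:Kostkafnu}, and conclude finite generation from general facts about polytopal semigroup algebras. The only difference is that you spell out the multiplicativity check (additivity of \eqref{eq:3}, \eqref{eq:4} and the scaling $dQ_w = Q_{dw}$) that the paper compresses into the phrase ``the homomorphism induced by,'' which is a reasonable thing to make explicit.
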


\begin{proof}
	By Lemma \ref{lem:lm}, $\inn_{\prec}(R)$ is generated 
	as a vector space by $m_{\tau}$ where $\tau$ runs over all  
	semistandard Young tableaux of shape $2\times \frac{d|w|}{2}$ 
	with content $dw$. 
 	Note that for such a  semistandard Young tableau $\tau$ 
	we have $m_{\tau} = x^{\nu} y^{w-\nu}$, where $\nu$ is the partition 
	associated to $\tau$ as in Definition \ref{def:partition}.
	Let $\phi \colon \inn_{\prec} R\to \k[x_1, \ldots, x_n,z]$ be the 
	homomorphism induced by letting $\phi(m_{\tau}) = x^{\nu}z^d$, when 
        $\tau$ has shape $2\times \frac{d|w|}{2}$. 
        Since $\nu$ determines $\tau$, this homomorphism is 
	injective. It follows from 
	Lemma \ref{lem:Kostkafnu}  that it is surjective 
	onto $\k[S_{Q_w}]$. 
Note that for any rational polytope, the associated 
polytopal semigroup algebra $\k[S_{Q_w}]$ is finitely generated.
\end{proof}

\begin{remark}
When $w=1^n$, one can show that this toric degeneration is a 
degeneration of Fano varieties, and the corresponding line bundle the 
anticanonical line bundle. As this seems well known, we omit the proof. 
\end{remark}

\section{The quadratic Gr\"obner basis}

We now assume that $w\in (2\Z)^n$. The goal of this section is to show that in this case, the polytopal 
semigroup algebra $\k[S_{Q_w}]$ is generated in degree 1, and admits a presentation 
such that the ideal of relations has a quadratic Gr{\"o}bner basis. 
It then follows from general properties of SAGBI degenerations 
that $R_w$ also admits such a presentation. 

Instead of showing these properties directly for the polytopes 
$Q_w$, we will show them for a family of isomorphic polytopes $P_w$.
The latter ones exhibit more symmetry that we will exploit later on.
We denote the $i$'th component of a vector $u\in \R^{n-3}$ by $u(i+1)$ instead of $u_i$. 

\begin{definition}\label{def:P}
	We say that  a point $(x,y,z) \in \R^3$ \emph{satisfies the triangle inequalities}
	if $x+y \geq z, x+z \geq y$, and $y+z \geq x$. 
	To $w\in (2\N)^n$ is associated a polytope $P_w \subset \R^{n-3}$
	consisting of $(u(2), \ldots, u(n-2))$ such that 
	\[\left(w_1,w_2, u(2)\right), \left(u(n-2),w_{n-1},w_n\right), \textrm{ and  }
	\left(u(i-1),w_i,u(i)\right)\] satisfy the triangle inequalities for
        $3\leq i \leq n-2$. 
\end{definition}

\begin{example}\label{ex:Pw}
	When $w=(2,2,2,2,2)$, the polytope $P_w$ is given by the 
	inequalities $0\leq u(2)\leq 4$, $0\leq u(3) \leq 4$, 
	$u(2)+u(3) \geq 2, u(2)+2\geq u(3), u(3)+2 \geq u(2)$. See Figure~\ref{fig:1}.
\end{example}

It follows from Lemma~\ref{lem:normal} that $P_w$ is a lattice polytope for the 
lattice $M:=(2\Z)^{n-3}$.

\begin{lemma}\label{lem:polytopeiso}
	The polytopal semigroups $S_{Q_w} = \{(\nu,d) \mid d\in 
	\N, \nu\in dQ_w \cap \Z^{n}\}$ 
	and $S_{P_w} = \{(u,d) \mid d\in \N, 
	u\in dP_w \cap M\}$ are 
	isomorphic. 
\end{lemma}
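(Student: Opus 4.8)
The plan is to construct an explicit affine-linear isomorphism between the two polytopes $Q_w$ and $P_w$ that respects the lattice structures, and then observe that it lifts to an isomorphism of the graded semigroups. The starting observation is that both polytopes live in ambient spaces of the same dimension once we account for the implicit constraints: the defining equation $\nu_1 + \cdots + \nu_n = |w|/2$ of $Q_w$ together with the forced values $\nu_1 = w_1$ and $\nu_n = 0$ from Lemma~\ref{lem:Kostkafnu} cut $Q_w$ down to a polytope genuinely depending on the $n-3$ free coordinates $\nu_2, \ldots, \nu_{n-2}$, matching the dimension of $P_w \subset \R^{n-3}$.

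First I would make the change of coordinates explicit. The triangle inequalities defining $P_w$ are built from the partial sums of the $w_i$, so the natural guess is to set $u(i)$ equal to an affine function of the partial sums $\nu_1 + \cdots + \nu_i$ and $w_1 + \cdots + w_i$. Concretely, I would try a substitution of the shape $u(i) = |w_{[i]}| - 2(\nu_1 + \cdots + \nu_i)$ (or a sign/shift variant thereof), where $|w_{[i]}| = w_1 + \cdots + w_i$, and then verify directly that the three families of triangle inequalities in Definition~\ref{def:P} translate precisely into the inequalities \eqref{eq:3} and \eqref{eq:4} defining $Q_w$. The inequality \eqref{eq:4}, namely $2(\nu_1 + \cdots + \nu_{\ell-1}) + \nu_\ell \geq w_1 + \cdots + w_\ell$, should correspond to one of the three triangle inequalities at each index, while the bounds $0 \le \nu_\ell \le w_\ell$ in \eqref{eq:3} should account for the remaining two; the bookkeeping at the two ends (the triples $(w_1, w_2, u(2))$ and $(u(n-2), w_{n-1}, w_n)$) handles the boundary indices where $\nu_1 = w_1$ and $\nu_n = 0$ are forced.

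Next I would check that this affine map is a lattice isomorphism for the correct lattices, i.e. that it carries $\Z^n$ (intersected with the hyperplane $\sum \nu_i = |w|/2$ and the forced-coordinate hyperplanes) bijectively onto $M = (2\Z)^{n-3}$. Here the hypothesis $w \in (2\Z)^n$ is essential: the factor of $2$ in the substitution $u(i) = |w_{[i]}| - 2(\nu_1 + \cdots + \nu_i)$ means the image coordinates are even, which is exactly why the reference lattice for $P_w$ is $(2\Z)^{n-3}$ rather than $\Z^{n-3}$ (compare the two reference lattices flagged in the caption of Figure~\ref{fig:1}). I would confirm that the map and its inverse both have integer (in fact the inverse has half-integer, but $w$ even saves it) coefficients, so that lattice points go to lattice points in both directions and the dilation parameter $d$ is preserved. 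Finally, since an affine-linear map preserving the lattices automatically commutes with dilation by $d \in \N$, it induces a bijection $dQ_w \cap \Z^n \to dP_w \cap M$ for every $d$, hence an isomorphism of graded semigroups $S_{Q_w} \cong S_{P_w}$ sending $(\nu, d)$ to $(u, d)$.

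The main obstacle I anticipate is not conceptual but bookkeeping: getting the exact form of the affine substitution right so that all $n$ triangle-inequality constraints (three per interior triple, plus the two boundary triples) match up one-for-one with the $2n$-ish inequalities \eqref{eq:3} and \eqref{eq:4}, with no redundancy and no omission, and simultaneously tracking the two endpoint reductions $\nu_1 = w_1$, $\nu_n = 0$ so that the free coordinate count is exactly $n-3$ on both sides. I would organize this by writing out the inequalities for small $n$ (for instance checking against Example~\ref{ex:Qw} and Example~\ref{ex:Pw} with $w = (2,2,2,2,2)$, where the explicit vertex sets are given) to fix the signs and constants in the substitution, and only then state the general formula and verify it index by index.
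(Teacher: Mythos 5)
Your proposal is correct and takes essentially the same route as the paper: the paper's proof restricts to the free coordinates $\nu_2,\ldots,\nu_{n-2}$, defines the homogenized affine map $u(\ell) = 2(\nu_2+\cdots+\nu_\ell) - d(w_2+\cdots+w_\ell - w_1)$ (which, using $\nu_1 = dw_1$, is exactly the negative of your guessed substitution, i.e.\ the ``sign variant'' you flagged), exhibits the explicit inverse $\nu_\ell = \frac{u(\ell)-u(\ell-1)+dw_\ell}{2}$ whose integrality rests on $w\in(2\Z)^n$, and then matches the inequalities of $dQ_w$ with the triangle inequalities of $dP_w$ line by line, precisely as you outline. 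The only caveat is that your stated sign would send $Q_w$ to $-P_w$ (the inequality $u(\ell-1)+u(\ell)\geq dw_\ell$ fails under negation), but since you explicitly planned to fix signs against the $w=(2,2,2,2,2)$ example, this is bookkeeping rather than a gap.
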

\begin{proof}
Let $V = \{(\nu_1, \ldots, \nu_n, d) \in  \R^{n}\times \R \mid
\nu_1=dw_1, \nu_n=0,\textrm{ and } \nu_1+ \cdots + \nu_n = \frac{d|w|}{2}\}$, an affine subspace of $\R^{n}\times \R$. Then $S_{Q_w} \subset V$. We identify $V$ with $\R^{n-3}\times \R$  
via $(\nu_1, \ldots, \nu_n,d) \mapsto (\nu_2, \ldots, \nu_{n-2},d)$, with
inverse $(\nu_2, \ldots, \nu_{n-2},d) \mapsto (dw_1, \nu_2, \ldots, \nu_{n-2}, \frac{d|w|}{2}- dw_1 -\nu_2 - \cdots -\nu_{n-2}, 0, d)$. 
Since $|w|$ is even, this identification respects the lattices 
$V\cap (\Z^{n}\times \Z)$ and $\Z^{n-3}\times \Z$. 
	 Let \[\phi\colon \R^{n-3} \times \R\to \R^{n-3}\times \R, (\nu_2, \ldots, 
	\nu_{n-2},d) \mapsto (u(2), \ldots, u(n-2),d),\] 
	where $u(\ell) = 2(\nu_2 + \cdots +\nu_{\ell}) - d(w_2+\cdots +w_{\ell} -w_1)$ for $2\leq \ell \leq n-2$.
	Then $\phi$ has an  inverse  given by letting \[\nu_2 
	= \frac{u(2)-dw_1+dw_2}{2} 
	\textrm{ and }
	\nu_{\ell} = \frac{u(\ell)-u(\ell-1)+dw_{\ell}}{2}\]
        for $3\leq \ell \leq n-2$. So $\phi$ is an isomorphism. 
Moreover, it induces an isomorphism between $\Z^{n-3}\times \Z$ and $M\times \Z$. 

        We claim that   $\phi(dQ_w\times \{d\}) = dP_w\times 
	\{d\}$.  Using the fact 
	that for $(\nu_1, \ldots, \nu_n) \in dQ_w$ we have 
	$\nu_1=dw_1, \nu_n = 0$, and 
	$\nu_{n-1} = \frac{d|w|}{2} - (dw_1+\nu_2 + \cdots + \nu_{n-2})$, 
	the inequalities 
	for $dQ_w \cap \R^{n-3}$ are in the left  column  
	below, where $3\leq \ell \leq n-2$. 
	The corresponding inequalities for $dP_w$ are on the 
	right. 
	\begin{IEEEeqnarray*}{rClrCL}
		\nu_2 & \geq & 0  &\  u(2) & \geq & dw_1 - dw_2 \\
		 \nu_2 & \leq &dw_2  & u(2) &\leq &dw_1 + dw_2 
		\\
		\nu_2 & \geq & dw_2 -dw_1 &\  u(2) &\geq & dw_2 - dw_1 
		\\
	\nu_{\ell}&\geq & 0 & u(\ell -1) -u(\ell) &\leq& dw_{\ell} \\
		\nu_{\ell} &\leq & dw_{\ell} 
		& u(\ell) - u(\ell-1) &\leq & dw_{\ell} 
		\\
	2(\nu_2+\cdots +\nu_{\ell-1})+\nu_{\ell}& \geq &
	d(w_2+\cdots +w_{\ell}-w_1) 
	&\qquad  u(\ell) + u(\ell-1) &\geq & dw_{\ell} \\
		\nu_2 + \cdots + \nu_{n-2} &\leq & 
		\frac{d|w|}{2} - dw_1 
		& u(n-2) & \leq & dw_{n-1} + dw_n 
		\\
		\nu_2 + \cdots + \nu_{n-2} & \geq & \frac{d|w|}{2} -dw_1-dw_{n-1} 
	&	u(n-2) & \geq & dw_n-dw_{n-1}\\
		\nu_2+\cdots + \nu_{n-2} & \geq & \frac{d|w|}{2} - dw_1- dw_n 
		& u(n-2) & \geq & dw_{n-1} -dw_n 
	\end{IEEEeqnarray*}
 where the last inequality on the left follows from 
\eqref{eq:4} for $\ell=n-1$. 
	It is now easy to check that the inequalities on the left 
	correspond to the inequalities on the right under 
	$\phi$, so $\phi$ induces the required 
	isomorphism of semigroups. 
\end{proof}

\begin{lemma}\label{lem:normal}
	We have the following properties of $P_w$.
	\begin{enumerate}[(i)]
		\item \label{lem:normal1}	
The polytope $P_w$ is normal with respect to $M$, i.e., every lattice point in $mP_w\cap M$ is a sum of $m$ lattice points in $P_w\cap M$. 
\item \label{lem:normal2} For $v,v'\in P_w\cap M$ there is $u,u'\in 
	P_w\cap M$ such that $v+v' = u+u'$ 
	and $|u(i)-u'(i)|\leq 2$ for all $2\leq i\leq n-2$. 
\end{enumerate}
\end{lemma}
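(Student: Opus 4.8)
The plan is to prove the exchange property (\ref{lem:normal2}) first, since it is the technical heart, and then to deduce normality (\ref{lem:normal1}) by peeling off one lattice generator at a time. Everything rests on one principle: the inequalities defining $P_w$ in Definition~\ref{def:P} are \emph{local}, in that each triangle inequality constrains only two consecutive coordinates $u(i-1),u(i)$ (the boundary triples constrain only $u(2)$ or $u(n-2)$). Rounding a rational point of a dilate of $P_w$ to the lattice $M=(2\Z)^{n-3}$ therefore amounts to a constraint satisfaction problem on the path $2,3,\dots,n-2$, and such problems are always solvable because a path has no cycles.

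For (\ref{lem:normal2}), given $v,v'\in P_w\cap M$, set $\bar u=\tfrac12(v+v')$. Since $v(i),v'(i)\in 2\Z$, each $\bar u(i)$ is an integer, and $\bar u\in P_w$ by convexity. The only obstruction to $\bar u\in M$ is the set $O=\{i:\bar u(i)\text{ odd}\}$, and any admissible pair must have the form $u=\bar u+\delta$, $u'=\bar u-\delta$ with $\delta(i)=0$ for $i\notin O$ and $\delta(i)\in\{+1,-1\}$ for $i\in O$; this automatically gives $u,u'\in M$ with $|u(i)-u'(i)|\le 2$. So the problem reduces to choosing the signs $\delta(i)$, $i\in O$, so that $u$ and $u'$ satisfy every triangle inequality. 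Writing one such inequality evaluated at $\bar u$ as an integer $g\ge 0$, the corresponding inequalities for $u$ and $u'$ are $g\pm(\delta(i-1)-\delta(i))\ge 0$ or $g\pm(\delta(i-1)+\delta(i))\ge 0$, so both hold exactly when $g\ge|\delta(i-1)-\delta(i)|$, respectively $g\ge|\delta(i-1)+\delta(i)|$. The parity of $g$ equals the number of the two relevant indices lying in $O$, so a constraint can fail only for a pair $i-1,i$ that both lie in $O$ and for which the pertinent $g$ vanishes.

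The key point is that such a pair imposes at most one condition, of the form $\delta(i-1)=\delta(i)$ or $\delta(i-1)=-\delta(i)$, and never both: if two of the three triangle inequalities of $(u(i-1),w_i,u(i))$ vanished so as to force both conditions, a short computation would give $\bar u(i-1)=0$ or $\bar u(i)=0$, contradicting $i-1,i\in O$. These conditions are edges of the subgraph of the path $2,\dots,n-2$ induced on $O$, which is a disjoint union of paths and hence contains no cycle; propagating a sign choice along each path yields a consistent assignment. The boundary triples impose nothing, since for $i\in\{2,n-2\}\cap O$ the relevant slacks are odd, hence strictly positive. This produces the desired $u,u'$, proving (\ref{lem:normal2}).

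For (\ref{lem:normal1}) I would induct on $m$, the cases $m\le 1$ being trivial. Given $p\in mP_w\cap M$ it suffices to find $u\in P_w\cap M$ with $p-u\in(m-1)P_w$, and then apply the inductive hypothesis to $p-u\in(m-1)P_w\cap M$. Since $p/m\in P_w$ and $p-p/m=(m-1)(p/m)\in(m-1)P_w$, the candidate set is nonempty, and I would round $p/m$ coordinatewise to a point $u\in M$, choosing the rounding directions exactly as in (\ref{lem:normal2}): for each consecutive pair the requirement now balances the tightness of a triangle inequality of $P_w$ at $u$ against that of $(m-1)P_w$ at $p-u$, which again yields at most one condition per pair on the path and so is solvable by a left-to-right sweep. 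The main obstacle, shared by both parts, is precisely this consistency of the coordinatewise rounding choices; it is guaranteed by the acyclic (path) structure of the coordinate coupling, which is why establishing that the tightness conditions form a forest -- and in particular that two triangle inequalities of one triple never force opposite conditions -- is the crux of the argument.
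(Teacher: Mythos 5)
Your proposal is correct and takes essentially the same route as the paper: both arguments round coordinates to the even lattice $M$, use parity to see that only triangle inequalities that are tight at a pair of odd coordinates constrain the rounding direction, resolve those constraints along the path $2,\dots,n-2$, and obtain normality by induction on $m$ after splitting off one rounded point whose complement lies in $(m-1)P_w$. Your ``the tightness conditions form a forest, so propagate signs'' step is precisely the paper's notion of an $(r,m)$-admissible sign sequence (which ``exists and is unique up to a global sign change'' for exactly this acyclicity reason); the only organizational difference is that you prove part (ii) directly from the integer midpoint $\tfrac12(v+v')$, whereas the paper proves a single rounding claim for general $m$ and deduces (i) by induction and (ii) by applying it with $m=2$ and opposite sign sequences.
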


\begin{proof}
The proof of (i) closely follows \cite[Lemma 7.3]{HMSV} and is essentially the proof of  Lemma 6.4 in \cite{HMSV05}.
We first need to introduce some notation. 
Let $\sigma \in \{+,-\}$ and let $e^{\sigma}$ 
denote rounding to the nearest even integer, where for $a \in \Z$, 
we let $e^{+}\left(2a+1\right) = 2a+2$ and $e^{-}\left(2a+1\right) = 2a$. 
For  
$r = (r(2),\ldots,r(n-2)) \in mP_w \cap M$, 
we say that a sequence of signs  
$\sigma(i)\in \{+,-\}$  for $2\leq i \leq n-3$ is $(r,m)$-admissible 
if it satisfies 
$\sigma\left(i+1\right) = - \sigma \left(i\right)$ if and only if  
$\frac{r\left(i\right)}{m}$ and 
$\frac{r\left(i+1\right)}{m}$ are odd integers 
and $\frac{r\left(i\right)}{m}+\frac{r\left(i+1\right)}{m} = 
w_{i+1}$. Such a sequence exists and is unique up to a global sign change. 

We claim that if 
$\sigma(i)$ is $(r,m)$-admissible, then
\[u_r = 
\(u\left(2\right),\ldots,u(n-2)\) =
\(e^{\sigma(2)}\(\frac{r(2)}{m}\), \ldots,e^{\sigma(n-2)}\(\frac{r(n-2)}{m}\)\), \]
is a  lattice point in  $P_w$.  

The following properties of $e^{\sigma}$ for $\sigma 
\in \{+,-\}$, $a\in 2\Z$ and $x,y\in \bR$ will be useful:
\begin{enumerate}
\item \label{eq:propI} $e^{\sigma}$ is increasing. 
\item \label{eq:propII} $e^{\sigma}\left(x+a\right)
=e^{\sigma}\left(x\right) +a.$ 
\item \label{eq:propIV} If $a\geq x$ then $a\geq e^{\sigma}(x)$.
\item \label{eq:propIII} If $x+y\geq a$  then $e^{\sigma}\left(x\right) +e^{-\sigma}\left(y\right)\geq a.$
\item \label{eq:propV} $e^{\sigma}(x) + e^{\sigma}(y) 
\geq x+y-2$. 
\item \label{eq:propVI} $e^{\sigma}(-x) =-e^{-\sigma}(x)$.

\end{enumerate}

That $\left(w_1,w_2,u\left(2\right)\right)$ and 
$\left(u\left(n-2\right),w_{n-1},w_n\right)$ satisfy the triangle 
inequalities follows 
from the assumption that $\left(w_1,w_2,\frac{r\left(2\right)}{m}\right)$ and 
$\left(\frac{r\left(n-2\right)}{m},w_{n-1},w_n\right)$ satisfy the 
triangle inequalities and 
(\ref{eq:propI}), (\ref{eq:propII}) and (\ref{eq:propIV}). 
For example, 
we have $w_1 + u\left(2\right)= w_1+e^{\sigma}\left(\frac{r\left(2\right)}{m}\right) 
= e^{\sigma}\left(w_1 + \frac{r(2)}{m}\right) \geq 
e^{\sigma}\left(w_2\right) = w_2$ and $w_1+w_2 \geq 
e^{\sigma}\left(\frac{r(2)}{m}\right)=u(2)$.

When $2\leq i\leq n-3$, we have to show 
\begin{align}
\label{eq:ineq2} u\left(i\right) + w_{i+1} &\geq u\left(i+1\right) \\
\label{eq:ineq1} u\left(i+1\right) + w_{i+1} &\geq u\left(i\right) \\
\label{eq:ineq3} u\left(i\right) + u\left(i+1\right) &\geq w_{i+1}.
\end{align}
We consider two cases. 
We first assume that $\sigma\left(i\right) = \sigma\left(i+1\right)$, and 
we let $\sigma = \sigma\left(i\right)=\sigma \left(i+1\right)$. 
Then $\frac{r\left(i\right)}{m}$ and $\frac{r\left(i+1\right)}{m}$ 
are not both odd integers 
or $\frac{r\left(i\right)}{m} + \frac{r\left(i+1\right)}{m} >  w_{i+1}$. 
To see (\ref{eq:ineq1}), note that $u\left(i+1\right) +w_{i+1} 
= e^{\sigma}\left(\frac{r\left(i+1\right)}{m}\right)+w_{i+1}
= e^{\sigma}\left(\frac{r\left(i+1\right)}{m}+w_{i+1}\right)
\geq e^{\sigma}\left(\frac{r\left(i\right)}{m}\right) = u\left(i\right)$ by (\ref{eq:propI}), (\ref{eq:propII})
and the assumption that $\left(\frac{r\left(i\right)}{m}, 
\frac{r\left(i+1\right)}{m}, w_{i+1}\right)$ satisfy the triangle 
inequalities. 
(\ref{eq:ineq2}) follows similarly. For (\ref{eq:ineq3}), 
if both $\frac{r\left(i\right)}{m}$ and $\frac{r\left(i+1\right)}{m}$ are odd  integers 
then $\roverm{i}+\roverm{i+1}> w_{i+1}$ by assumption. 
Hence, by (\ref{eq:propV}),  $u\left(i\right)+u\left(i+1\right)\geq \roverm{i}+\roverm{i+1} - 2 > w_{i+1}- 2$
which implies (\ref{eq:ineq3}) since $u\left(i\right),u\left(i+1\right)$ and 
$w_{i+1}$ are even. Otherwise there exists 
$x\in \left\{\frac{r\left(i\right)}{m},\frac{r\left(i+1\right)}{m}\right\}$ that is not an odd integer. Then $e^{\sigma}\left(x\right) = e^{-\sigma}\left(x\right)$ and now 
(\ref{eq:ineq3}) follows from 
\eqref{eq:propIII}.

Suppose now that 
$\sigma\left(i+1\right) 
 = -\sigma\left(i\right)$. Then 
$\roverm{i}$ and $\roverm{i+1}$ are odd integers
and $\roverm{i} + \roverm{i+1} = w_{i+1}$. 
Then \eqref{eq:ineq3} follows from \eqref{eq:propIII}. If $\sigma\left(i\right) = +$ and $\sigma\left(i+1\right) = -$ 
then \eqref{eq:ineq2} follows easily. 
Since $\roverm{i+1}$ is a positive odd integer, we have 
$2\roverm{i+1}>0$, and using 
the assumption $\roverm{i}+\roverm{i+1} = w_{i+1}$, we obtain 
$u\left(i+1\right) + w_{i+1} = \roverm{i+1} - 1 + w_{i+1} > \roverm{i}  - 1 = u\left(i\right) - 2$. Since $u(i+1), 
u(i)$ and $w_{i+1}$ are even integers, \eqref{eq:ineq1} follows. 
The case $\sigma\left(i\right) = -$ and 
$\sigma\left(i+1\right)= +$ follows analogously.  

To show (i), we proceed by induction on $m$. For $m=1$ there is nothing to show. 
Assume that $m\geq 2$.
For $r\in mP\cap M$, let $u=u_r$ as in the claim. 
Then by the claim, $u$ lies in  
$P_w \cap M$. Let $v = (v(2), \ldots, v(n-2))$, where $v(i)=e^{-\sigma(i)}\left(\frac{m-1}{m}r(i)\right)$. Note that it follows from \eqref{eq:propIII} and \eqref{eq:propVI} that  $u(i) + v(i) = 
r(i)$, so 
$u+v = r$. 
Replacing $u$ by $v$, $\sigma$ by $-\sigma$, $w_i$ by $(m-1)w_i$ and $\frac{r(i)}{m}$ by $\frac{m-1}{m}r(i)$, and noting that $\left(\frac{m-1}{m}r\right) \in (m-1)P_w$ and that $\frac{r(i)}{m}$ is odd if and only if $\frac{(m-1)r(i)}{m}$ is odd, 
the same arguments as in the proof of 
the claim show that $v=r-u\in (m-1)P_w\cap M$. But $v$ is a sum of $m-1$ lattice points in $P_w$ by induction. 

For (ii),
we apply the claim to 
$r=v+v'$ and
let $(\sigma(2), \ldots, \sigma(n-2))$ be a
$(r,2)$-admissible sequence of signs. 
Then by  
the 
claim we have that 
\begin{eqnarray*} u &=&
\(e^{\sigma(2)}\(\frac{r(2)}{2}\), \ldots,e^{\sigma(n-2)}\(\frac{r(n-2)}{2}\)\)
\textrm{ and } \\
u'&=&
\(e^{-\sigma(2)}\(\frac{r(2)}{2}\), \ldots,e^{-\sigma(n-2)}\(\frac{r(n-2)}{2}\)\)
\end{eqnarray*}
 are lattice points in $P_w$.  
The assertion follows. 
\end{proof}

Let $J$ be the toric ideal 
associated to the polytope $P_w$, i.e., $J$ is the kernel of the map 
$\k[X_u \mid u\in P_w \cap M]\to \k[S_{P_w}]$, where 
$X_u \mapsto (u(2),u(3),\ldots,u(n-2),1)$. 
Since $P_w$ is normal, the 
line bundle associated to $P_w$ induces a projectively normal 
embedding of the toric variety $X_{P_w}$ associated to $P_w$, with 
homogeneous coordinate ring $\k[X_u]/J$.  By Proposition~\ref{prop:sagbi}
and Lemma~\ref{lem:polytopeiso}, 
the toric variety $X_{P_w}$ is isomorphic to $\mathrm{Proj}(\inn_{\prec}(R))$.

\begin{definition}
Let $m = \prod_{t=1}^{\ell} X_{u_t}$ be a monomial in $\k[X_u]$. 
We define the  \emph{norm} of $m$ to be 
$$N(m) =
\sum_{t=1}^{\ell}\parallel u_t\parallel^2 =  \sum_{t=1}^{\ell} \sum_{i=2}^{n-2} u_t(i)^2.$$
\end{definition}

\begin{definition}
We say that a monomial $m$ is \emph{norm-minimal}, 
if for all $m'$ with $m'-m\in J$, we have $N(m')\geq N(m)$.  
\end{definition}

The following lemma characterizes norm-minimal monomials. 

\begin{lemma}\label{lem:minimal}
A monomial 
$m$ is norm-minimal if and only if 
for all $X_vX_{v'}$ dividing $m$, we have $|v(i)-v'(i)|\leq 2$
for all $2\leq i\leq n-2$. 
\end{lemma}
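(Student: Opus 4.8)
The plan is to prove both directions of the equivalence. The forward direction (norm-minimal implies the pairwise condition) I would establish by contraposition: if some $X_vX_{v'}$ divides $m$ with $|v(i)-v'(i)|>2$ for some coordinate $i$, then I will produce a monomial $m'$ with $m'-m\in J$ and $N(m')<N(m)$, contradicting norm-minimality. The natural candidate for this replacement comes directly from Lemma~\ref{lem:normal}\eqref{lem:normal2}: applied to the pair $v,v'\in P_w\cap M$, it yields $u,u'\in P_w\cap M$ with $v+v'=u+u'$ and $|u(i)-u'(i)|\leq 2$ for all $i$. Since $v+v'=u+u'$ as lattice points, the binomial $X_uX_{u'}-X_vX_{v'}$ lies in the toric ideal $J$, so replacing the factor $X_vX_{v'}$ of $m$ by $X_uX_{u'}$ gives $m'$ with $m'-m\in J$.

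The key computation is then that this replacement strictly decreases the norm. I would verify that among all pairs summing to the fixed vector $s=v+v'$, the norm $\sum_i(u(i)^2+u'(i)^2)$ is minimized precisely when the two vectors are as close as possible coordinatewise, i.e. when $|u(i)-u'(i)|$ is as small as possible. Concretely, for a fixed sum $a+b=c$ the quantity $a^2+b^2$ decreases as $|a-b|$ decreases, so coordinate by coordinate the balanced pair $u,u'$ from Lemma~\ref{lem:normal}\eqref{lem:normal2} has norm no larger than that of $v,v'$, and strictly smaller in any coordinate where $|v(i)-v'(i)|>2$ (since then the difference genuinely drops). This yields $N(m')<N(m)$, proving the contrapositive.

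For the converse (the pairwise condition implies norm-minimality), suppose every $X_vX_{v'}$ dividing $m$ already satisfies $|v(i)-v'(i)|\leq 2$. I would argue that $m$ minimizes the norm in its fiber by showing that any norm-decreasing exchange must go through a binomial of the form $X_uX_{u'}-X_vX_{v'}$ in $J$, and that the toric ideal $J$ for a normal polytope is generated in degree two. Since $P_w$ is normal by Lemma~\ref{lem:normal}\eqref{lem:normal1} and its semigroup elements differ by exchanges of lattice-point pairs with common sum, any two monomials in the same fiber are connected by a sequence of such quadratic moves; the norm is a well-defined function on the fiber that strictly increases whenever a balanced pair is replaced by a less balanced one. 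Thus if $m$ is maximally balanced in the sense of the pairwise condition, no single quadratic move can lower its norm, and connectivity by quadratic moves then propagates this to the entire fiber, giving $N(m')\geq N(m)$ for all $m'$ with $m'-m\in J$.

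The main obstacle I expect is the converse direction, specifically justifying that being unimprovable by a \emph{single} quadratic exchange forces global norm-minimality over the whole fiber. This requires knowing that the fiber is connected under quadratic moves and that the norm behaves monotonically under them; the first point relies on normality of $P_w$ (already available), while the second is the coordinatewise convexity estimate $a^2+b^2$ as a function of $|a-b|$. I would take care to handle the possibility that an exchange which helps one coordinate could hurt another, arguing that Lemma~\ref{lem:normal}\eqref{lem:normal2} in fact produces a \emph{simultaneous} improvement across all coordinates at once, so no such trade-off arises and a local norm minimum is automatically global.
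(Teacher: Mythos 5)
Your forward direction matches the paper's: argue by contraposition, replace an unbalanced factor $X_vX_{v'}$ of $m$ by the balanced pair $u,u'$ supplied by Lemma~\ref{lem:normal}\eqref{lem:normal2}, and compare squares coordinatewise. (One point you gloss over: in a coordinate where $|v(i)-v'(i)|\leq 2$, the lemma does not tell you $|u(i)-u'(i)|\leq|v(i)-v'(i)|$; you need the parity remark that for even entries with a fixed sum the difference is determined mod $4$, so the two differences there are equal in absolute value and the norm cannot increase in such coordinates. This is easily repaired.) The converse, however, has genuine gaps. First, your claim that ``the toric ideal $J$ for a normal polytope is generated in degree two'' is false in general: normality (Lemma~\ref{lem:normal}\eqref{lem:normal1}) says the semigroup is generated in degree one, but it does not bound the degrees of generators of the ideal of relations; there exist normal lattice polytopes whose toric ideals require generators of degree three or more. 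Worse, in this paper quadratic generation of $J$ is precisely part of what Theorem~\ref{thm:GB} proves, and the proof of that theorem uses Lemma~\ref{lem:minimal}, so invoking it here would be circular. Second, even granting connectivity of the fiber under quadratic exchanges, ``no single quadratic move lowers the norm'' does not imply ``no monomial in the fiber has smaller norm'': a path of quadratic moves from $m$ to a lower-norm $m'$ may pass through monomials of larger norm, so local minimality under a move set does not propagate to global minimality. Your proposed fix (the simultaneous coordinatewise improvement from Lemma~\ref{lem:normal}\eqref{lem:normal2}) addresses the replacement step in the forward direction, not this local-to-global inference.

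The paper's converse avoids both issues by proving a purely arithmetic fact $(\star)$ about $\ell$-tuples, not just pairs: if $a_1,\dots,a_\ell\in 2\Z$ pairwise differ by at most $2$ and $b_1,\dots,b_\ell\in 2\Z$ have the same sum, then $\sum_i a_i^2\leq\sum_i b_i^2$, with equality exactly when the multisets coincide. If every quadratic factor of $m=X_{u_1}\cdots X_{u_\ell}$ is balanced, then in each coordinate $i$ the numbers $u_1(i),\dots,u_\ell(i)$ pairwise differ by at most $2$, while any $m'=X_{v_1}\cdots X_{v_\ell}$ with $m'-m\in J$ satisfies $\sum_s v_s(i)=\sum_s u_s(i)$; applying $(\star)$ in each coordinate and summing over $i$ gives $N(m)\leq N(m')$ directly, with no appeal to generation or connectivity of $J$. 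If you want to keep your structure, a correct route is: take $m^*$ of minimal norm in the (finite) fiber, note by the forward direction that $m^*$ is balanced, and show that a coordinatewise-balanced multiset of even integers with prescribed sum is unique, so $N(m)=N(m^*)$ --- but that uniqueness is exactly the equality case of $(\star)$, so some version of the paper's arithmetic lemma is unavoidable.
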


\begin{proof}
	To prove the Lemma, we will need the following fact:

	\begin{tabular}{cp{13cm}}
$(\star)$ & 
Let $a_i, b_i \in 2\Z$ with $a_1+\cdots+a_{\ell} = b_1+\cdots+b_{\ell}$, and $|a_i-a_j|\leq 2$ for all $i,j$. Then $\sum_{i=1}^{\ell} a_i^2 \leq \sum_{i=1}^{\ell} b_i^2$. Moreoever, equality holds if and only 
if $\{a_1, \ldots, a_{\ell}\} = \{b_1, \ldots, b_{\ell}\}$. 
\end{tabular}

Given $(\star)$, assume that $m$ is norm-minimal, but that there
exists a quadratic factor $X_vX_{v'}$ of $m$ and $2\leq i \leq n-2$ 
such that $|v(i) - v'(i)| > 2$. By Lemma~\ref{lem:normal}, there 
are $u,u'\in P_w\cap M$ with $v+v' = u+u'$ and $|u(i)-u'(i)|\leq 2$ 
for all $i$. So $X_uX_{u'}-X_vX_{v'}\in J$, and by $(\star)$, 
$N(X_uX_{u'}) < N(X_vX_{v'})$. Thus for $m'=m\frac{X_uX_{u'}}{X_vX_{v'}}$, we have $m'-m\in J$, but $N(m') < N(m)$, a contradiction. 
The converse follows immediately from $(\star)$. 

For $(\star)$, note that since $|a_i-a_j|\leq 2$ for all 
$i,j$, there exists $\alpha$ such that $a_i\in \{\alpha, \alpha+2\}$
for all $i$. After renumbering, we may assume $a_1=\cdots = a_p=\alpha$ and $a_{p+1}=\cdots = a_{\ell} = \alpha+2$. If we set $b_i = a_i+k_i$, then $\sum k_i = 0$, and we have $\sum_{i=1}^{\ell} b_i^2 - \sum_{i=1}^{\ell} a_i^2 = 
\sum_{i=1}^{\ell} k_i^2 + \sum_{i=p+1}^{\ell} 4k_i$. 
Note that when $k_i\leq -4$, then $k_i^2 + 4k_i \geq 0$, and 
if $k_i \geq 0$, then $4k_i \geq 0$, so it suffices to show that 
if $\sum k_i \geq 0$, $k_i \in 2\Z$, and $k_i =-2$ for
$p+1 \leq i\leq \ell$, then $\sum_{i=1}^{\ell} k_i^2  + \sum_{i=p+1}^{\ell}4k_i \geq 0$. This in turn follows from the fact that for $k\in 2\N$ we 
have $k^2 \geq 
 2k$, so 
for $k_i\in 2\N$ with $\sum_{i=1}^{p} k_i \geq 2 (\ell-p)$, we have
$\sum_{i=1}^{p} k_i^2   
\geq \sum_{i=1}^p 2k_i \geq 4(\ell -p)$. 
Now suppose equality holds, so 
$\sum_{i=1}^{\ell}k_i^2 + \sum_{i=p+1}^{\ell} 4k_i = 0$ where $k_i \in 2\Z$ and 
$\sum k_i = 0$. 
Note that $k_i^2 + 4k_i$ is non-negative unless
$k_i=-2$. If $R = \{ i\mid k_i =-2\}$ 
and $T=\{i \mid k_i >0\}$, then  we must have $\sum _{i\in T}
k_i \geq 2|R|$ and $\sum_{i\in T} k_i^2 + \sum_{i\in T, 
i\geq p+1} 4k_i\leq 4|R|$, but the 
only situation when this holds is when $k_i =2$ for all $i\in T$, 
no element in $T$ is larger than $p$, and $|T|=|R|$. 
The claim follows. 
\end{proof}

We now proceed to define a term order 
on the monomials in the variables $X_u$, $u \in P_w$. 

We first use the  standard lexicographic ordering $<_{\lex}$ on 
$M\cong \Z^{n-3}$  to 
order the variables $X_u, u\in P_w$.
 Let $\prec_{\grevlex}$ 
be the graded reverse lexicographic order on $k[X_u \mid u\in P_w \cap M]$ induced 
by this ordering of the variables, i.e., $m'\prec_{\grevlex} m$ if $\deg(m') <
\deg(m)$ or $\deg(m') = \deg(m)$ and for the smallest variable 
where the exponents of $m$ and $m'$ differ the exponent of $m'$ is larger than the exponent of $m$. 

We define
$m' \prec m$ iff
\begin{itemize}
\item $\deg(m') < \deg(m)$, or
\item $\deg(m') = \deg(m)$ and $N(m') < N(m)$, or
\item $\deg(m') = \deg(m)$, $N(m') = N(m)$, and
$m'\prec_{\grevlex} m$.
\end{itemize}

We shall consider two types A, B of quadratic binomial relations.

\begin{definition}(Type A)
The type A relations are relations 
$X_vX_{v'} - X_{u}X_{u'} \in J$, where
$ N(X_{v}X_{v'})> N(X_u X_{u'})$. 
\end{definition}

\begin{definition}(Type B at position $j$)
Suppose that $u ,v \in P_w \cap M$, and $3 \leq j \leq n-2$.
Suppose that 
$(u(j-1),w_{j},v(j))$ and $(v(j-1),w_{j},u(j))$ satisfy the 
triangle inequalities.  
Let $$u'= (u(2),\ldots,u(j-1),v(j),\ldots,v(n-2)),$$
$$v' = (v(2),\ldots,v(j-1),u(j),\ldots,u(n-2)).$$   
We call $X_uX_{v}- X_{u'}X_{v'}$ a type B relation at position $j$.
\end{definition}

Note that $u', v'\in P_w$ and $u+v = u'+v'$, so a type $B$ relation is 
well-defined. 
Moreover, 
$N(X_uX_v) = N(X_{u'}X_{v'})$ for any relation $X_uX_v - X_{u'}X_{v'}$ of type B.

\begin{theorem}\label{thm:GB}
The relations of type A and B form a quadratic 
Gr{\"o}bner basis for the ideal $J$. 
\end{theorem}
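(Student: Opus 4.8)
The plan is to verify the two standard Gr\"obner basis criteria: first, that the type A and B relations actually lie in $J$ and generate it; and second, that every monomial not divisible by a leading term of these relations is \emph{standard} (i.e., that the standard monomials are linearly independent modulo $J$, or equivalently that they number exactly $\dim_\k \k[S_{P_w}]_d$ in each degree). The containment in $J$ is immediate: both relation types are binomials $X_aX_b - X_cX_d$ with $a+b=c+d$ in $M$, so they vanish under the monomial map defining $J$. The real work is the leading-term analysis and Buchberger-style confluence.

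First I would pin down the leading terms. For a type A relation $X_vX_{v'}-X_uX_{u'}$ with $N(X_vX_{v'})>N(X_uX_{u'})$, the term order $\prec$ compares norm before $\grevlex$, so $X_vX_{v'}$ is the leading term; by Lemma~\ref{lem:minimal} this is precisely a non-norm-minimal quadratic monomial. For a type B relation at position $j$, both monomials have equal norm, so the order defaults to $\prec_{\grevlex}$, and I would identify the leading term as the one that is $\grevlex$-larger. The key structural claim I want is: a quadratic monomial $X_vX_{v'}$ is a leading term of \emph{some} relation in our set if and only if it is not already the leading monomial of a standard form — and I would characterize the standard (non-leading) quadratic monomials as exactly those norm-minimal $X_uX_{u'}$ (so $|u(i)-u'(i)|\le 2$ for all $i$ by Lemma~\ref{lem:minimal}) that are additionally reduced with respect to every type B move. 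This two-part obstruction (a norm obstruction handled by type A, and a $\grevlex$/positional obstruction handled by type B) is the conceptual heart.

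The main obstacle will be showing these relations suffice, i.e., that the standard monomials span $\k[S_{P_w}]$; equivalently, that any monomial can be rewritten into a canonical norm-minimal, type-B-reduced form, and that this form is unique. I would argue by a descent: given any product $X_{u_1}\cdots X_{u_\ell}$, repeatedly apply type A relations to drive down the total norm $N$ until every adjacent (indeed every) pair is norm-minimal, using Lemma~\ref{lem:normal}(ii) to guarantee that a norm-reducing replacement with $|u(i)-u'(i)|\le 2$ exists whenever a pair violates this. Once norm-minimal, I would use type B relations at the various positions $j$ to sort the coordinate-strings into a $\grevlex$-canonical shape; here the triangle-inequality hypotheses in the Type B definition are exactly what keeps the swapped vectors $u',v'$ inside $P_w$. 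The delicate point is confluence: I must check that the two families of relations do not interfere, i.e., that the rewriting terminates at a unique normal form regardless of the order of moves. I expect to establish this by verifying that the normal forms are in bijection with lattice points of $S_{P_w}$ via a counting/dimension argument (using normality, Lemma~\ref{lem:normal}(i), so that degree-$d$ lattice points are sums of $d$ points of $P_w\cap M$), rather than by grinding through all S-pair overlaps by hand.

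Concretely, the steps in order are: (1) confirm both relation types lie in $J$; (2) compute their leading terms under $\prec$, matching type A to norm-descent and type B to $\grevlex$-sorting at a fixed position; (3) describe the resulting standard monomials as norm-minimal and type-B-reduced products; (4) show every monomial reduces to such a standard form, invoking Lemma~\ref{lem:normal}(ii) for the norm step and the triangle-inequality structure of $P_w$ for the sorting step; and (5) prove the standard monomials are exactly a $\k$-basis for $\k[S_{P_w}]$, so no further relations are needed, which by the standard Gr\"obner basis criterion forces the given set to be a Gr\"obner basis. The one genuinely technical verification I would not shortcut is step (4)'s termination-and-uniqueness, but I anticipate the normality of $P_w$ from Lemma~\ref{lem:normal}(i) lets me replace a tedious Buchberger check with the cleaner statement that the number of standard monomials in each degree equals the Hilbert function of the normal toric algebra $\k[S_{P_w}]$.
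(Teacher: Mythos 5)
Your setup is sound: the relations visibly lie in $J$, your identification of leading terms is correct (norm decides type A, $\prec_{\grevlex}$ decides type B), and your descent argument (type A strictly lowers the norm, type B preserves it while lowering $\prec_{\grevlex}$, and $\prec$ is a term order, so reduction terminates) correctly shows that every monomial reduces to a ``standard'' one, i.e.\ norm-minimal and type-B-reduced. This reproduces the easy half of the paper's proof, namely that a non-norm-minimal monomial of $\In_{\prec}(J)$ is divisible by the leading term of a type A relation, via Lemmas~\ref{lem:minimal} and~\ref{lem:normal}(ii).

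The gap is your step (5). You propose to get uniqueness of normal forms ``by verifying that the normal forms are in bijection with lattice points of $S_{P_w}$ via a counting/dimension argument'' using normality, Lemma~\ref{lem:normal}(i). But this is circular: normality plus your reduction step only gives \emph{surjectivity} of the sum map from standard monomials of degree $d$ onto $dP_w\cap M$; what you need is \emph{injectivity}, i.e.\ that two distinct standard monomials cannot have the same multidegree sum. Injectivity is exactly the linear independence of standard monomials modulo $J$ (equivalently, confluence of the rewriting system), which is the statement to be proven --- there is no independent count of the standard monomials available to compare against $|dP_w\cap M|$. The paper fills this gap with the hard combinatorial core of its proof: given a binomial $m-m'\in J$ with \emph{both} sides norm-minimal and $m\succ m'$, it writes $m=X_{u_1}\cdots X_{u_\ell}$, $m'=X_{v_1}\cdots X_{v_\ell}$ in lex order, factors out the gcd, uses norm-minimality to show all entries in each coordinate $k$ lie in $\{\alpha_k,\alpha_k+2\}$, locates the first coordinate $j$ where $v_1(j)<u_1(j)$, and then --- with a case analysis on whether $(u_t(j-1),w_j,u_1(j))$ satisfies the triangle inequalities, passing to a third index $s$ when it does not --- exhibits a type B relation at position $j$ whose $\prec_{\grevlex}$-leading term divides $m$. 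Without this argument (or a genuine substitute for it), you have only shown that the type A and B relations generate an ideal whose initial ideal is \emph{contained} in $\In_{\prec}(J)$ together with a terminating reduction; the Gr\"obner basis property itself remains unproved.
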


\begin{proof}
It suffices to show that 
\[ \In_{\prec}(J) = \< \In_{\prec}f \mid f \text{ is a type A 
or a type B relation}\>.\]
Let $m\in \In_{\prec}J$. 
Suppose $m$ is not norm-minimal. 
By  Lemma~\ref{lem:minimal}  there exists  
a quadratic factor $X_vX_{v'}$ dividing $m$ such that 
$|v(i) -v'(i)|> 2$ for some $i$. By Lemma~\ref{lem:normal} there is 
$u,u'\in P_w\cap M$ such that $X_vX_{v'}-X_uX_{u'}\in J$, and 
$|u(i)-u'(i)| \leq 2$ for all $i$. By Lemma~\ref{lem:minimal}, 
$X_uX_{u'}$ is norm-minimal, and $X_vX_{v'}$ is not, so 
$X_vX_{v'}-X_uX_{u'}$ is a type $A$ relation and 
 $m \in \<X_vX_{v'}\> \subset 
\< \In_{\prec}f \mid f \text{ is a type A relation}\>$.  

Suppose $m$ is norm-minimal. 
Let $f\in J$ be such that $\In_{\prec}f=m$. Since $J$ is a 
homogeneous binomial ideal, we may assume that $f=m-m'$, where $m'$ is a norm-minimal monomial 
of the same degree as $m$. 
Then 
$N(m') = N(m)$, but  $m' \prec_{grevlex} m$.  
Let $m = X_{u_1}X_{u_2}\cdots X_{u_\ell}$, where 
$u_1\leq_{lex} u_2 \leq_{lex} \cdots \leq_{lex} u_\ell$
and $m' = X_{v_1} X_{v_2} \cdots X_{v_\ell}$ where 
$v_1 \leq_{lex} v_2 \leq_{lex} \cdots \leq_{lex} v_\ell$.
Note that since $m-m'\in J$, we have 
\begin{equation}\label{eq:idealmembership}
 \sum _{s=1}^{\ell} u_s = \sum _{s=1}^{\ell}v_s.
\end{equation} 
Factoring out the largest common multiple of $m$ and $m'$, we 
may assume $v_1<_{lex} u_1$.  
Let $j$ be the first index 
where $v_1(j) < u_1(j)$. 
Note that if $i<j$ then $u_1(i) = v_1(i)$.

It follows from Lemma \ref{lem:minimal} and 
from \eqref{eq:idealmembership} that 
for every $2\leq k\leq n-2$ there exists an even integer 
$\alpha_k$ such that for every $X_u$ dividing $m$ or $m'$, 
$u(k) \in \{ \alpha_k, \alpha_k+2\}$.

This implies that  
$v_1(j) = \alpha_j$ and $u_1(j)=\alpha_j + 2$.
It follows from (\ref{eq:idealmembership}) 
that there exists some $t > 1$ such that $u_t(j) = v_1(j) = \alpha_j$.  
Note that, since $u_1<_{\lex} u_t$ and $u_1(j) > u_t(j)$, there is $i'<j$ such that  $u_1(i')<u_t(i')$ and $u_1(i)=u_t(i)$ for 
$i<i'$. In particular,  
$j>2$. 
Now $(u_1(j-1),w_j,u_t(j)) = (v_1(j-1), w_j, v_1(j))$ satisfy 
the triangle inequalities, 
since $v_1\in P_w$. 
If $(u_t(j-1), w_j, u_1(j))$ satisfy the 
triangle inequalities, then there exists a type B relation 
at position $j$  of the form $X_{u_1}X_{u_t}- X_{u_1'}X_{u_t'}$. 
Since $u_1' <_{\lex} 
u_1 <_{\lex} u_t$,  we have 
$X_{u_1}X_{u_t} \succ_{grevlex} X_{u_1'}X_{u_t'}$. 
Therefore,  $m\in \In _{\prec}\{ f \mid f \textrm{ is a type B relation\}}$.

Now suppose $(u_t(j-1), w_j, u_1(j))$ do not satisfy the 
triangle inequalities. 
This implies that whenever $v_s(j-1)= u_t(j-1)$, then 
$v_s(j) \neq u_1(j)$, since $(v_s(j-1), w_j, v_s(j))$ satisfy the 
triangle inequalities and so
 $v_s(j) = v_1(j)$. 
Note however that  $v_1(j-1)=
u_1(j-1) \neq u_t(j-1)$ by assumption. 
In particular, 
\[ 
\{ s \mid 1\leq s \leq \ell, v_s(j-1) = u_t(j-1)\} \subsetneq
\{ s \mid 1\leq s \leq \ell, v_s(j) = v_1(j)\}.\]

Since $\sum u_s(j-1) = \sum v_s(j-1)$, we have
\[ \#\{s \mid 1\leq s \leq \ell, v_s(j-1) = 
u_t(j-1)\} = \# \{s \mid 1 \leq s \leq \ell, 
u_s(j-1) = u_t(j-1) \} \]
and similarly \[ 
 \#\{s \mid 1\leq s \leq \ell, v_s(j) = 
v_1(j)\} = \# \{s \mid 1 \leq s \leq \ell, 
u_s(j) = v_1(j) \}. \]
 
Using $u_t(j) = v_1(j)$, we obtain
\[ 
\# \{ s \mid 1\leq s \leq \ell, u_s(j-1) = u_t(j-1)\}<
\# \{ s \mid 1\leq s \leq \ell, u_s(j) = u_t(j)\}.\] 

Pick an $s$ such that $u_s(j-1) \neq 
u_t(j-1)$ but $u_s(j) = u_t(j)$.
Since $u_1(j) \neq u_t(j)$, $s\neq1$. 
By assumption, $u_1(j-1)\neq u_t(j-1)$, hence $u_s(j-1) = u_1(j-1)$ 
since $\{u_s(j-1), u_1(j-1), u_t(j-1)\} \subset \{\alpha_{j-1}, \alpha_{j-1}+2\}$. 
So $(u_s(j-1), w_j, u_1(j)) 
= (u_1(j-1), w_j, u_1(j))$
satisfy the triangle inequalities.   On the other hand,  since $u_s(j) = u_t(j) = v_1(j)$, 
we have that $(u_1(j-1), w_j, u_s(j))  = 
(v_1(j-1), w_j, v_1(j))$ satisfy the triangle inequalities. So there exists a type B relation  
$X_{u_1}X_{u_s} - X_{u_1'}X_{u_s'}$ at position $j$. Note that $X_{u_1}X_{u_s}
\succ_{\grevlex} X_{u_1'}X_{u_s'}$, so 
$m\in \In _{\prec}\{ f \mid f \textrm{ is a type B relation\}}$.
\end{proof}

\begin{proposition}\label{prop:rad}
The initial ideal $\In_\prec(J)$ is radical.
\end{proposition}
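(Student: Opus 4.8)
The plan is to show that $\In_\prec(J)$ is a \emph{squarefree} monomial ideal, which is automatically radical, so no separate primary-decomposition computation is needed. First I would invoke the standard fact that a monomial ideal $I$ is radical if and only if it admits a generating set of squarefree monomials: for any monomial ideal one has $\sqrt{I}=\langle \sqrt{m}\mid m \text{ a generator of } I\rangle$, where $\sqrt{\prod X_u^{a_u}}=\prod_{a_u>0}X_u$, so $I=\sqrt{I}$ exactly when $I$ is generated by squarefree monomials. Thus the whole statement reduces to checking that a single explicit generating set of $\In_\prec(J)$ consists of squarefree monomials.

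That generating set is furnished by Theorem~\ref{thm:GB}: the type~A and type~B relations form a Gr\"obner basis for $J$ with respect to $\prec$, hence $\In_\prec(J)$ is generated by the leading monomials of these relations. Each relation is a quadratic binomial $X_uX_{u'}-X_vX_{v'}$, so its leading term is a product $X_aX_b$ of two variables, and I only need to verify $a\neq b$ in each case, i.e. that the two lattice points indexing the factors are distinct.

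For a type~A relation $X_vX_{v'}-X_uX_{u'}$ the leading term is $X_vX_{v'}$, since it has the strictly larger norm. A type~A relation exists only when $X_vX_{v'}$ is not norm-minimal, so by Lemma~\ref{lem:minimal} there is an index $i$ with $|v(i)-v'(i)|>2$; in particular $v\neq v'$ and $X_vX_{v'}$ is squarefree. For a type~B relation at position $j$, the points $u',v'$ are obtained from $u,v$ by interchanging all coordinates in positions $\geq j$. If $u=v$ then $u'=v'=u$ and the binomial is the zero polynomial, so any nontrivial type~B relation has $u\neq v$, and symmetrically $u'\neq v'$; hence both $X_uX_v$ and $X_{u'}X_{v'}$ are squarefree and, whichever of the two is selected by $\prec$ as the leading term, that leading term is squarefree.

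I do not expect a genuine obstacle here: the substantive content is entirely contained in Theorem~\ref{thm:GB}, and the present statement is essentially its squarefree refinement. The only point that requires care is confirming that the two variables appearing in each leading binomial are genuinely distinct, which as above comes down to $v\neq v'$ for type~A (from $|v(i)-v'(i)|>2$) and $u\neq v$ for type~B (from nontriviality of the relation). Once this is in place, $\In_\prec(J)$ is generated by squarefree monomials and is therefore radical, completing the proof.
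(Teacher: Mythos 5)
Your proof is correct, and it shares the paper's two essential inputs: Theorem~\ref{thm:GB}, which identifies the generators of $\In_\prec(J)$ as the quadratic leading terms of the type A and type B relations, and Lemma~\ref{lem:minimal}, which translates norm-minimality into the coordinate condition $|v(i)-v'(i)|\leq 2$. The difference is in how squarefree-ness of the generators is verified. You argue type-by-type: the leading term $X_vX_{v'}$ of a type A relation is not norm-minimal, so some coordinate gap exceeds $2$ and hence $v\neq v'$; for type B you make the structural observation that the position-$j$ swap gives $u'=v'$ exactly when $u=v$, so any nonzero type B relation has both monomials squarefree. The paper instead gives a single uniform argument that never inspects the Gr\"obner basis elements beyond their quadraticity: a perfect square $X_v^2$ is automatically norm-minimal (its two factors coincide, so Lemma~\ref{lem:minimal} applies trivially), whereas any nonzero binomial $X_v^2 - X_uX_w \in J$ forces $2v=u+w$ and $N(X_v^2) < N(X_uX_w)$; since $\prec$ refines the norm within each degree, $X_v^2$ is never the leading term of a nonzero binomial in $J$, and quadratic generation then excludes all squares from $\In_\prec(J)$. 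Your route buys explicitness (one sees exactly which squarefree monomials generate the initial ideal) at the cost of a case analysis, including the extra check on type B; the paper's route is shorter and more robust, as it would survive any re-description of the Gr\"obner basis so long as the initial ideal remains quadratically generated. There is no gap in your argument.
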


\begin{proof}
Since the initial ideal is generated by quadratic monomials, we only 
need to check there is no perfect square $X_v^2 \in \In_\prec(J)$.  
Suppose that $X_v^2 - X_u X_w \in J$ is a non-zero 
relation for some $u$, $v$, $w$.  
Then $2v = u+w$, and so $N(X_v^2) < N(X_u X_w)$, 
since by  
Lemma~\ref{lem:minimal}
$X_v^2$ is norm-minimal, 
but $X_uX_w$ is not.
So $X_v^2$ is no  leading term of any binomial in $J$. 
\end{proof}

Note that this implies that $P_w$ admits a regular unimodular 
triangulation, see \cite[Corollary 8.9]{Sturmfels96}

\begin{proof}[Proof of Theorem~\ref{thm:main}]  
	By Kempe's theorem 
	\cite[Theorem 2.3]{HMSV}, $R_w$ is generated by 
	its lowest degree invariants. Note that for 
	 $w\in (2\Z)^n$ this also follows from Lemma~\ref{lem:normal}, 
Proposition~\ref{prop:sagbi}, and Lemma~\ref{lem:polytopeiso}.
	Recall that $I_w$ denotes the ideal of relations between these 
	generators. Since the polytopal semigroup algebra $\k[S_{P_w}]$ 
	is a SAGBI degeneration of $R_w$ by Proposition~\ref{prop:sagbi}, and since the toric ideal $J$  associated to $P_w$ has a square free quadratic initial ideal by Theorem~\ref{thm:GB} and Proposition~\ref{prop:rad}, the claim 
	follows from \cite[Corollary 2.2]{ConcaHerzogValla}. 
	The Koszul property follows for example from \cite[Proposition 3]{EisenbudReevesTotaro}.
\end{proof}

\begin{remark}\label{rem:Manon}
	In \cite[Theorem 1.10]{Manon12} Manon 
	generalizes Theorem~\ref{thm:main} to certain subpolytopes of $P_w$. For $L\geq 0$, the polytope
	$P_L$ is given 
	by the adding to the inequalities of Definition~\ref{def:P}
	the inequalities $w_1 + w_2 + u(2) \leq 2L, u(n-2)+w_{n-1}+w_n \leq 2L, u(i-1)+ w_i + u_i \leq 2L$. Note that when $L$ is large, then 
	$P_w=P_L$. For $L=1$, the polytopes $P_L$ are slices 
	of the polytopes studied in \cite{BuczynskaWisniewski}.
\end{remark}

\begin{remark}
	 It is easy to see that for $w=1^n$, $P_w$ is 
	 reflexive and that  $S_{P_w}$ is Gorenstein. 
	 This implies that the toric variety $V$ associated 
	 to $P_w$ is arithmetically Gorenstein and Fano. 
In particular,  $V$ has 
 canonical singularities. However,  
 when $w=1^6$, then $V$ does not have terminal 
 singularities. 
 Compare also \cite[Proposition 1.4]{Nill05a}.  The Gorenstein 
	 property for the toric varieties arising as degenerations 
	 of $R_w$ corresponding to arbitrary 
	 trivalent trees was studied by Manon in  
	 \cite{Manon08}. 
\end{remark}

\bibliographystyle{plain-annote}
\bibliography{/Users/milenahering/Math/mybib}
\end{document}